\documentclass[reqno,10pt]{amsart}

\usepackage{amsthm}
\setlength{\textwidth}{14.5cm}
\addtolength{\oddsidemargin}{-0.5cm}
\addtolength{\evensidemargin}{-0.5cm}
\usepackage{amssymb,amsfonts}
\usepackage{mathtools}

\usepackage{enumerate}
\usepackage{mathrsfs}
\usepackage{mathtools}
\usepackage{xcolor}
\usepackage{hyperref}
%\usepackage{tikz}
%\usepackage{tikz-cd}
%\usetikzlibrary{arrows,calc,shapes,decorations.pathreplacing}
%\usepackage[toc,page]{appendix}

% \usepackage{sagetex}

%--------Theorem Environments--------
%--------Theorem Environments--------
\usepackage{amsthm}
\theoremstyle{definition}
\newtheorem{thm}{Theorem}[section]
\theoremstyle{definition}

\newtheorem{cor}[thm]{Corollary}
\newtheorem{prop}[thm]{Proposition}
\newtheorem{lem}[thm]{Lemma}
\newtheorem{conj}[thm]{Conjecture}

\newtheorem{defn}[thm]{Definition}

\newtheorem{rem}{Remark}[section] 

\def\C{{\mathbb C}}
\def\Z{{\mathbb Z}}
\def\D{{\mathbb D}}

\def\Q{{\mathbb Q}}

\def\F{{\mathbb F}}

\DeclareMathOperator{\End}{End}

\DeclareMathOperator{\np}{np}

\DeclareMathOperator{\Hom}{Hom}
\DeclareMathOperator{\Mod}{mod}

\DeclareMathOperator{\ord}{ord}

\DeclareMathOperator{\Gal}{Gal}

\DeclareMathOperator{\Sp}{Spec}
\DeclareMathOperator{\Spf}{spf}
\DeclareMathOperator{\disc}{disc}

\def\o{{\mathcal O}}
\def\G{{\mathscr G}}
\def\F{{\mathbb F}}
\def\F{{\mathbb F}}

\def\p{{\mathfrak p}}

\DeclareMathOperator{\tr}{tr}

\DeclareMathOperator{\coker}{coker}

\usepackage{tikz-cd}
%\DeclareMathOperator{\im}{im}
%\renewcommand{\Re}{\operatorname{Re}}

%\renewcommand{\Im}{\operatorname{Im}}
%\newcommand*{\isoarrow}[1]{\arrow[#1,"\rotatebox{90}{\(\sim\)}"
%]}
%\newcommand{\lie{p}}[1]{\ \mathfrak{p}\ #1}$A$
%\DeclarePairedDelimiterX\set[1]\lbrace\rbrace{\def\given{\;\delimsize\vert\;}#1}
%\makeatletter
%\let\c@equation\c@thm
%\makeatother
%\numberwithin{equation}{section}

%\hypersetup{ colorlinks=true,
%linkcolor=blue
%}

\hypersetup{ colorlinks=true,
linkcolor=blue
}
 % Colours hyperlinks in blue, but this can be distracting if there are many links.

%% ----------------------------------------------------------------https://www.overleaf.com/project/615ba3f0eed72633c768af67
\begin{document}
\title{size of isogeny classes of abelian varieties of lubin-tate type}

\author{Tejasi Bhatnagar}

\vspace{-1cm}

\email{tejasi.bhatnagar@gmail.com}

\address{Department of Mathematics\\ University of Wisconsin Madison\\ Van Vleck Hall, 480 Lincoln Drive\\ Madison, Wisconsin\\ United States.}
\subjclass[2000]{11G15 (Primary), 11G10 (Secondary).}
\keywords{Abelian varieties, finite fields, complex multiplication, isogeny class, Newton stratum}

\begin{abstract}
We prove a lower bound for the size of the isogeny class of a simple abelian variety over a finite field with commutative endomorphism ring in the Lubin-Tate case. Moreover, based on the expected size of the isogeny classes in the Newton stratum, we conjecture that this lower bound is sharp.
\end{abstract}
\maketitle
\section{Introduction}
\subsection{The Newton stratum}Throughout this paper, we denote by $\F_q$ the finite field with $q$ elements where $q$ is a power of an odd prime. For any positive integer $n$, we denote by $\F_{q^n}$ the unique finite field extension of $\F_q$ of degree $n$. We will denote the algebraically closed field $\overline{\F}_q$ by $k$. Let $\mathcal{A}_{g,1}$ denote the moduli space of $g$ dimensional principally polarized abelian varieties in characteristic $p.$ This moduli space  is often studied by describing it as a disjoint union of locally closed subvarieties. One way to do this is to introduce a ``stratification" using the Newton polygon which is an invariant in characteristic $p$. Let $A$ be a principally polarized abelian variety over $\F_q$ and let $N(A)$ be the associated Newton stratum. We know that this is a locally closed subvariety of $\mathcal{A}_{g,1}$. Then  $N(A)(\F_{q^n})$ consists of the points of $\mathcal{A}_{g,1}$ defined over $\F_{q^n}$ that have the same Newton polygon as that of $A$. In particular, it contains all the elements in the isogeny class of $A$ as defined below.
\begin{defn}(Isogeny class) We define $I(A, \F_{q^n})$, to be the set of isomorphism classes of principally polarized abelian varieties $y\in  \mathcal{A}_{g,1}(\F_{q^n})$ isogenous to $A$ over $k$. We call this the isogeny class of $A$ defined over $\F_{q^n}.$
\end{defn}
In his paper \cite{foliations}, Frans Oort gives an ``almost product" structure to the Newton stratum, which we call a ``foliation". The foliation consists of two ``transversal" subvarieties known as the  ``central leaf" and the ``isogeny leaf". The central leaf contains all the abelian varieties over $\F_{q^n}$ whose $p$-divisible group is isomorphic to that of $A$ over $k$. In particular, it contains all the abelian varieties that are isogenous to $A$ by prime-to-$p$ isogenies, while the ``isogeny leaf" consists of the abelian varieties that are isogenous to $A$ by a certain $p$-power isogeny. Using this structure, Shankar and Tsimerman (see \cite{tsimshank}, Section 2) find heuristics for the size of isogeny classes of abelian varieties. We briefly describe this.
\subsection{Heuristics for the size of the isogeny class.}\label{heuristic}
Let $c(A)$ and $\iota(A)$ denote the central leaf and the isogeny leaf of $A$ respectively. 
\begin{defn}(Weil $q$-number of an abelian variety) Let $A$ be as above. Then we know that any eigenvalue $\alpha_A$ of its geometric Frobenius is an algebraic integer such that for every embedding $\psi: \Q(\alpha_A)\hookrightarrow \C$, we have $|\psi(\alpha_A)| = \sqrt{q}.$ Such an algebraic integer is called a Weil $q$-number. We recall that two Weil $q$-numbers $\alpha$ and $\alpha^{\prime}$ are conjugate if there is an isomorphism between $\Q(\alpha)\rightarrow \Q(\alpha^{\prime})$ taking $\alpha$ to $\alpha^{\prime}$.  The map  $A\mapsto \alpha_A$ from the isogeny classes of abelian varieties to Weil $q$-numbers (up to conjugation) is a bijection for simple abelian varieties by the Honda-Tate theory.

\end{defn}

By \cite{tsimshank}, Lemma 2.2, we know that the number of Weil $q^n$-numbers with a fixed Newton polygon is roughly $O(q^{n \dim c(A)/2})$ which gives us an estimate for the number of $\F_{q^n}$ isogeny classes in the Newton stratum. As one should expect, the size of $I(A, \F_{q^n})$ should be $O(q^{n(\dim c(A)/2+\dim \iota(A))})$ to account for roughly $q^{n \dim N}$ points of $N(\F_{q^n})$. The dimensions of the central leaves and Newton strata can be computed explicitly and only depend on the Newton polygon of $A$. This was proved in a follow-up paper by Oort (see \cite{dimfol}, Sections 5 and 6).

\subsection{Previous work} The question of counting isogeny classes is trivial for supersingular abelian varieties. Indeed, all the abelian varieties in the supersingular stratum are isogenous over $k$. Therefore, the Newton stratum is the entire isogeny class in this case. One of the first results goes back to the upper bound for elliptic curves by Lenstra \cite{lenstra} and subsequent work provides mainly lower bounds in higher dimensions. The lower bounds of $I(A, \F_{q^n})$ for the ordinary and the almost ordinary cases is worked out in \cite{tsimshank} and \cite{AA} respectively. In another paper \cite{yume} with Yu Fu, we estimate isogeny classes of certain abelian varieties with real multiplication. In this case, we provide upper and lower bounds. The second author of the paper has work in this direction as well. For example, see \cite{yufuisog} for the case of a product of a supersingular elliptic curve with an ordinary one. 
\vspace{2mm}

 In this paper, we further extend the work for the Newton stratum of \textit{Lubin-Tate type}.  
\begin{defn}
A $p$-divisible group is of Lubin-Tate type if it is a direct sum of $p$-divisible groups such that the local-local part of each summand or that of its dual has dimension one. 
\end{defn}

As a consequence, the Newton polygon of each $p$-divisible group in the summand has slope $1/m$ or $1-1/m$ for some positive integer $m>1$. In this paper, we work with principally polarized abelian varieties whose Newton polygon has slope $1/g$ and $(g-1)/g$. Since $g=2$ amounts to the abelian variety being supersingular, we will take $g>2$ for the rest of the paper.
 \subsection{The main theorem}
Let $A\in  \mathcal{A}_{g,1}(\F_{q})$ be a simple abelian variety with the Weil $q$-number $\alpha_A$. For convenience, we drop the subscript and the dependence on $A$ will be implicit. We assume that $1,\alpha =\alpha_1,$ and its conjugates $\alpha_2, \dots \alpha_g$, are multiplicatively independent. Let the $p$-divisible group $A[p^{\infty}
             ]$ of $A$ be of Lubin-Tate type, that is,  its Newton polygon has slopes $1/g$ and $(g-1)/g$.  We further assume that $K := \Q(\alpha)$ is the geometric endomorphism algebra of $A$ such that $K\otimes \Q_p$ is a direct sum $K^{\prime}\oplus K^{\prime}$ where $K^{\prime}$ is the degree $g$ unramified field extension of $\Q_p.$

For any positive integer $n$, let $|I(A, \F_{q^n})|$ denote the size of the set $I(A, \F_{q^n})$. We prove the following lower bound for  $|I(A, \F_{q^n})|$. 

\begin{thm}\label{mainthm}
For a set of positive integers $n$ with positive natural density, we have the following lower bound  $$|I(A,\F_{q^n})|\geq q^{n(\frac{(g+1)(g-2)}{4}+1+o(1))}.$$

\end{thm}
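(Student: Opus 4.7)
The plan is to count $I(A,\F_{q^n})$ by exploiting Oort's foliation of the Newton stratum (\cite{foliations}): for each Weil $q^n$-number $\beta$ whose associated isogeny class over $k$ coincides with that of $A$, I would count the PPAVs lying in the corresponding isogeny leaf. Writing $W(n)$ for the number of admissible Weil $q^n$-numbers and $L(n)$ for the number of $\F_{q^n}$-points of a single isogeny leaf, this gives
\[
N(A,\F_{q^n}) \;\gg\; W(n)\cdot L(n).
\]
The identity $\tfrac{\dim c}{2} + \dim I = \tfrac{(g+1)(g-2)}{4}+1$ for the extended Lubin-Tate Newton polygon (slopes $1/g$ and $(g-1)/g$, each of multiplicity $g$) is what produces the target exponent.

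First I would set up the parametrization via Honda--Tate and commutative CM theory. Our hypotheses force $K_p = K\otimes \Q_p$ to split as $K_p = L_1 \oplus L_2$ with each $L_i$ an unramified extension of $\Q_p$ of degree $g$, encoding the two slopes. By a Deligne/Centeleghe--Stix-type equivalence, abelian varieties in the isogeny class of $A\otimes \F_{q^n}$ correspond to invertible modules over the order $\Z[\alpha^n, q^n/\alpha^n]$ in $K$, and PPAVs correspond to such modules equipped with a positive self-dual pairing. In this dictionary, the $p$-adic component of the module moves along the isogeny leaf, while the prime-to-$p$ component together with the Galois orbit of $\beta$ moves along (and between) central leaves.

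For the count itself, I would invoke the Tsimerman--Shankar estimate from \cite{tsimshank} on Weil $q^n$-numbers with the correct Newton polygon, giving $W(n) \gg q^{n\dim c/2 + o(n)}$ for a positive density of $n$. For $L(n)$, I would analyze lattices in the rational Dieudonn\'e module $D = (L_1\oplus L_2)\otimes_{\Q_p} W(k)[1/p]$ stable under the Frobenius-semilinear action: since each $L_i$ is unramified of degree $g$ with Lubin-Tate structure, the count reduces to a flag-variety-type problem on each factor, yielding $L(n) \gg q^{n\dim I}$, where the formula for $\dim I$ is given in \cite{dimfol}.

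The main obstacle will be the polarization compatibility. I would need to show that, for a positive proportion of the above modules, the self-duality condition required by a principal polarization is satisfied. This is where the ``positive density of $n$'' restriction enters: a Chebotarev-type argument arranges that the obstructing local signs and norm-residue classes become trivial for a positive density of $n$, and Brauer--Siegel applied to the CM order prevents the number of isomorphism classes from collapsing when passing from ideals to PPAVs. Combining the two counts and this polarization-survival proportion yields the bound $N(A,\F_{q^n}) \ge q^{n((g+1)(g-2)/4+1+o(1))}$.
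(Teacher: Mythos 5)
Your central decomposition $N(A,\F_{q^n}) \gg W(n)\cdot L(n)$, with $W(n)\approx q^{n\dim c/2}$ the number of Weil $q^n$-numbers of the given Newton polygon, does not give a lower bound for a \emph{single} isogeny class. Those Weil numbers index (roughly) the distinct $k$-isogeny classes inside the Newton stratum, not elements of the fixed class $I(A,\F_{q^n})$: for a fixed simple $A$, a Weil $q^n$-number $\beta$ yields an abelian variety $k$-isogenous to $A$ only if a power of $\beta$ agrees with a power of $\alpha$ up to conjugacy, so this factor contributes $O(1)$, not $q^{n\dim c/2}$. The Weil-number count from \cite{tsimshank} enters the paper only as a heuristic predicting the \emph{expected} size of each class (points in the stratum divided by the number of classes). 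The actual source of the $q^{n\dim c/2}$ in the theorem is arithmetic: the minimal endomorphism order $R_n$ in the class contains $\Z[\alpha^n,(q/\alpha)^n]$, is forced to be maximal at $p$ by the Lubin--Tate structure of the $p$-divisible group, and equals $\Z_l[\alpha]$ away from $p$; its discriminant grows like a power of $q^n$, so $|cl(R_n)|$ grows like $q^{n(g^2-2g)/2}$. Passing to principal polarizations via the kernel of the norm map $cl(R_n)\to cl^{+}(R_n^{+})$ cuts this to $q^{n(g+1)(g-2)/4}$, and the remaining factor $q^{n\dim I}=q^{n}$ comes from the isogeny-leaf ($p$-power isogeny) directions. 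Your proposal, as written, would instead bound the number of points of the whole Newton stratum.

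A second gap: the ``Deligne/Centeleghe--Stix-type equivalence'' you invoke to identify the isogeny class with invertible $\Z[\alpha^n,q^n/\alpha^n]$-modules is exactly what is \emph{not} available here; such classifications exist in the ordinary case (Deligne) and almost ordinary case (Oswal--Shankar), and the paper's whole strategy is designed to avoid needing one. The substitute is: fix a CM lift $\widetilde A$ over $L'=W(k)[1/p]$, prove the reduction map on characteristic-zero isogeny classes is injective (this uses Fontaine's classification of finite flat group schemes over the unramified base, whence the hypotheses $p>2$ and $K\otimes\Q_p$ unramified), embed $cl(R_n)$ into the characteristic-zero isogeny class, and only then reduce mod $p$. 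Finally, the ``positive density of $n$'' does not come from a Chebotarev argument: it comes from (i) keeping the index of $\Z_p[\beta^n]$ in the maximal order bounded for a density-one set of $n$, and (ii) an equidistribution-on-the-torus argument (as in Proposition 3.6 of \cite{tsimshank}) showing $\varphi(\lambda_n)/i>0$ for all $\varphi\in\Phi$ for a positive proportion of $n$, which requires the multiplicative independence of $\{q^{1/2},\alpha_1,\dots,\alpha_g\}$.
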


\begin{conj}
For a set of positive integers $n$ with positive natural density, we have the following estimate $$|I(A,\F_{q^n})|= q^{n(\frac{(g+1)(g-2)}{4}+1+o(1))}.$$
\end{conj}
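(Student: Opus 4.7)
The plan is to exploit Oort's foliation of the Newton stratum containing $A$ and to bound $|I(A,\F_{q^n})|$ below by a product of two counts: the number of $\F_{q^n}$-rational isogeny classes with the correct Newton polygon and local conditions, and the number of $\F_{q^n}$-points on an isogeny leaf. Applying Oort's dimension formulas from \cite{dimfol} to the extended Lubin--Tate Newton polygon (slopes $\tfrac{1}{g}$ and $\tfrac{g-1}{g}$) gives $\dim c = \tfrac{(g+1)(g-2)}{2}$ and $\dim I = 1$, so the heuristic $|I(A,\F_{q^n})| \gtrsim q^{n(\dim c/2 + \dim I)}$ recalled in the introduction predicts exactly the exponent of the theorem.

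First I would count the Weil $q^n$-numbers $\beta$ whose associated $\F_{q^n}$-isogeny class (via Honda--Tate) lies in the prescribed Newton stratum and satisfies the hypothesis $\Q(\beta)\otimes \Q_p$ unramified. Adapting the equidistribution argument of \cite{tsimshank} to incorporate the prescribed $p$-adic valuation profile (slopes $1/g$ and $(g-1)/g$) together with the unramifiedness constraint, one should establish that, for a positive density set of $n$, the number of such $\beta$ is at least $q^{n(g+1)(g-2)/4 + o(n)}$. This ``square-root of $\dim c$'' count reflects the fact that a Weil number parameterizes an isogeny class rather than an isomorphism class, which is precisely the factor $1/2$ in $\dim c/2$.

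Second, for each such $\beta$, I would exhibit at least $q^{n+o(n)}$ principally polarized abelian varieties over $\F_{q^n}$ in the associated $\F_{q^n}$-isogeny class. By Oort's almost-product decomposition, such varieties are parameterized (up to a finite correspondence absorbed in the $o(n)$) by the $\F_{q^n}$-points on the one-dimensional isogeny leaf, and a Lang--Weil estimate on a suitable smooth model yields the required count. Multiplying the two estimates produces the lower bound of Theorem \ref{mainthm}; a short verification, using that distinct geometric Frobenius elements correspond to distinct $k$-isogeny classes up to finite ambiguity, ensures that no overcounting contributes beyond the $o(1)$ term.

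The main obstacle is the equidistribution step: producing a lower bound on the number of Weil $q^n$-numbers with the prescribed local data at $p$ and with $\Q(\beta) \otimes \Q_p$ unramified. This is where the restriction to a positive density set of $n$ originates, since sieve-theoretic inputs of \cite{tsimshank}-type typically yield positive-density rather than pointwise lower bounds; the unramifiedness condition further restricts which $n$ are ``good'' through congruence conditions on $q^n$. A secondary difficulty is ensuring that $\F_{q^n}$-points on the isogeny leaf can be realized by $\F_{q^n}$-rational principally polarized abelian varieties, which may require a bounded enlargement of the base field or a Galois cohomology vanishing argument controlling the descent of polarizations.
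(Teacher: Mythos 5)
The target statement is a \emph{conjecture} asserting $N(A,\F_{q^n})= q^{n((g+1)(g-2)/4+1+o(1))}$, i.e.\ an equality, hence matching upper \emph{and} lower bounds. Your argument, by your own description (``Multiplying the two estimates produces the lower bound of Theorem~\ref{mainthm}''), only yields a lower bound; you never explain where the upper bound would come from. The lower bound is exactly the content of Theorem~\ref{mainthm}, which the paper does prove; the upper bound is the genuinely open part. To prove it one would need to control the class numbers of \emph{every} order $R\subset K$ that can occur as $\End(B)$ for $B$ in the $k$-isogeny class of $A$, not merely the single minimal order $R_n$, and the paper states explicitly that parametrising this family of orders ``seems out of reach.'' Your proposal does not engage with that difficulty at all, so it cannot establish the conjectured equality.

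Even for the lower-bound half, your route differs materially from the paper's and leaves its own gaps. The paper fixes the $\F_{q^n}$-isogeny class with Weil number $\alpha^n$, lifts to characteristic zero, proves that reduction mod $v$ is injective on CM lifts (Proposition~\ref{inj}), and obtains the factor $q^{n\dim c/2}$ from the ratio $|cl(R_n)|/|cl^{+}(R_n^{+})|$ inside that single isogeny class; the isogeny leaf factor is then multiplied on. You instead allocate $q^{n\dim c/2}$ to the count of Weil $q^n$-numbers, i.e.\ to the number of distinct $\F_{q^n}$-isogeny classes, and $q^{n\dim I}$ to Lang--Weil on a one-dimensional isogeny leaf. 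Even granting this decomposition of the central-leaf points, you would still need to justify (i) that the Weil numbers $\beta$ you count all satisfy $\Q(\beta)\cong K$ and produce abelian varieties $k$-isogenous to $A$, not merely ones sharing the Newton polygon, and (ii) that the $\F_{q^n}$-points of the (a priori $k$-defined) isogeny leaf genuinely descend to principally polarised abelian varieties over $\F_{q^n}$ lying in $I(A,\F_{q^n})$. Both points are asserted (``up to a finite correspondence absorbed in the $o(n)$,'' ``a Galois cohomology vanishing argument'') rather than argued, and neither follows from the dimension formulas of \cite{dimfol} alone.
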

We note that $q$ and $g$ are fixed in the above estimates. The dimension of the Newton stratum  is $g(g-1)/2$ and that of the central leaf $c(A)$  is $(g+1)(g-2)/2$. Therefore, the bound in Theorem \ref{mainthm} agrees with the expected estimate. 

As mentioned in \cite{tsimshank}, one way to prove the upper bound would be to estimate the class groups of \textit{all} the endomorphism rings of the isogeny class. We might do this by finding a parametrisation of non-maximal orders of $K$ containing the smallest possible endomorphism ring. This seems out of reach. However, methods involving orbital integrals have provided upper bounds in certain cases as well. See for instance \cite{orbit}.
\subsection{Comparison of methods from previous cases} The methods used to find bounds in the cases of ordinary and almost ordinary abelian varieties was a consequence of a classification theorem due to Deligne for ordinary abelian varieties and by Oswal-Shankar in the almost ordinary case. In both the classification results, a crucial step is to find ``canonical" CM lift(s) of abelian varieties over $\F_q$ to characteristic zero. Our proof strategy as described below is different from the previous cases in that we do not use lifting methods but count in characteristic zero and then deduce the count mod $p.$
\subsection{Overall strategy} Let $W(-)$ denote the functor of Witt vectors. Let $L = W(\F_q)(1/p)$ be the unramified extension of $\Q_p$ with residue field $\F_q$, and $L^{\prime}$ be $W(k)(1/p)$,  the maximal unramified extension of $\Q_p.$ Let $v$ be the unramified place of $L^{\prime}$ over $p.$

\subsubsection*{Counting in characteristic zero} We count in characteristic zero (over $L^{\prime})$ as follows. We fix an abelian variety $\widetilde{A}$ over $L$ such that the geometric endomorphism algebra of $\widetilde{A}$ and that of its reduction $A$, is the CM field $K$. We further fix a CM-type $(K, \Phi)$ on the endomorphism algebra of $\widetilde{A}$. As a first step, we note that there are only finitely many  abelian varieties over $L^{\prime}$ that reduce to $A$ modulo $v$. Indeed, the number of such abelian varieties over $L^{\prime}$ with endomorphism algebra $K$ depends on the number of CM-types of $K$. There are only finitely many such. Furthermore, once we fix a CM type on $K$, we get an injective map from the isogeny class over $L^{\prime}$ to the isogeny class in char $p.$ This is proved in Section \ref{sec1}, Proposition \ref{inj}.  Therefore, it is enough to count the number of abelian varieties over $L^{\prime}$ with a fixed CM type. Let $n$ be any positive integer. Now, to obtain a lower bound for $|I(A, \F_{q^{n}})|$, we reduce the problem to counting the isomorphism classes of abelian varieties in $I(\widetilde{A}, L^{\prime})$ whose endomorphism algebra $K$ contains a certain fixed endomorphism ring $R_n$. To that end, we give an injection from $Cl(R_n)$, the class group of $R_n$, to $I(\widetilde{A},L^{\prime})$. We note that the ring $R_n$ depends on $n$ and is characterized by certain local conditions at the prime $p$, which we compute in Section \ref{localcon}. 
It is crucial to take into account these conditions as they help in estimating the class number of $R_n$. Moreover, these local conditions ensure that all the abelian varieties in the set $I(\widetilde{A},L^{\prime})$ with endomorphism ring $R_n$ descend to $W(\F_{q^n})(1/p)$ (Lemma \ref{descend}). Consequently, this gives us a sharp lower bound for $|I(A, \F_{q^n})|$ once we reduce mod $p.$  
    \subsubsection*{Reducing mod $p$} After we carry out the count in characteristic zero, we reduce mod $p$ all isogenous abelian varieties over $W(\F_{q^n})(1/p)$ with endomorphism ring $R_n$. In Section \ref{centralleaf}, we show that all these abelian varieties land in exactly one central leaf in the Newton stratum that contains $A$. We can then accommodate the $p$-power isogenies coming from the isogeny leaf of $A$ by multiplying the count in characteristic zero with $q^{n\dim \iota(A)}$. Hence, it is sufficient to prove that $|I(\widetilde{A}, L^{\prime})|\geq q^{n\dim c(A)/2}$ to match our upper bound with the heuristic in Section \ref{heuristic}. 
    \begin{rem}
More generally, by \cite{KLSS} Theorem 3.6, it is known that the set of $\overline{\F}_q$-points of any Newton stratum that have CM lifts lie in finitely many central leaves. We can use this result to apply this strategy of counting points in characteristic zero and then reducing mod $p$ to accommodate the points from the isogeny leaf separately, to other non-ordinary Newton strata as well.
\end{rem}
\subsection{Organisation of the paper}
In Section \ref{sec1} we prove that the reduction mod $p$ map from the isogeny class in characteristic zero to finite fields is injective. In Section \ref{localcon} and \ref{propring}, we compute the local conditions for the endomorphism ring that we use to count and certain properties of abelian varieties with that particular endomorphism ring. In Section \ref{estimates}, we give an injection between the isogeny class and the class group of the order.  In Section \ref{count}, we find bounds for the (unpolarized) isogeny class by estimating the class group of the order. Finally, Section \ref{polarization} is dedicated to restricting our count to principally polarized abelian varieties.

\subsection{The unpolarized count} To make use of the theory of moduli spaces in characteristic $p$, we need to restrict our question to  principally polarized abelian varieties. Indeed, in the polarized case, the completion of $\mathcal{A}_{g,1}$ over $\F_p$ at an $\F_q$-point $A$, denoted by $\mathcal{A}_{g,1}^{\backslash A} = \Spf(Y)$ represents the deformation functor of $A$ as a formal scheme. If we consider the universal deformation $\mathscr{A}$ over $\Spf(Y)$, then that can be algebraised to give us an abelian scheme over $\Sp (Y)$ since $\mathscr{A}$ is principally polarized. Let $\mathscr{G}$ denote its $p$-divisible group over $\Sp(Y)$ and $\mathscr{G}_s$ the pullback of $\mathscr{G}$ to a point $s\in \Sp(Y).$ Then the polarized central leaf of $A$ is defined to be all $s\in \Sp (Y)$ with residue field $k(s)$ such that the $p$-divisible group $\mathscr{G}_s\otimes \overline{k(s)}$ of the point is isomorphic to that of $A$ over $\overline{k(s)}.$ In the unpolarized case, the universal deformation $\mathscr{A}^{\np}$  over $\Spf(X)$ of $A$ need not algebraise. Here $\Spf(X)$ is the unpolarized deformation space of $A$. However, its $p$-divisible group over $\Spf(X)$ comes from a $p$-divisible group $\mathscr{G}^{\np}$ over $\Sp(X)$ (see \cite{dejong}, Lemma 2.4.4). The notion of unpolarized central leaf still makes sense. Suppose $x_0$ is the closed point of $\Sp(X)$ that represents $A$. Then the unpolarized central leaf of $A$ is the set of all points $s\in \Sp(X)$ such that $\mathscr{G}_s^{\np}\otimes \overline {k(s)}$ is isomorphic to the $p$-divisible group of the special fibre $\mathscr{G}^{\np}_{x_0}$ over $\overline {k(s)}$. This represents a formal object in the local deformation space of $A$. The dimension of the unpolarized central leaf is also determined by the Newton polygon of $A$ and can be computed using the methods outlined in \cite{dimfol}, Section 4. Now, without referring to moduli spaces, we can ask for the count of isogeny classes of abelian varieties over finite fields without any conditions of polarization. This count in characteristic zero will be the same as computed in Section \ref{unpolri} before we take polarization into account. This is recorded in Corollary \ref{unpol}. We note that analogously to the case of polarization, the lower bound for this estimate is of order $\mathcal{O}(q^{\dim c_{\np}/2})$ where $c_{\np}$ is the unpolarized central leaf in the local deformation space of $A.$ Therefore, the count over finite fields will be greater than or equal to $q^{n(\dim (c_{\np}/2+\iota_{\np})+o(1))}$ where $\iota_{\np}$ is the unpolarized isogeny leaf of the $p$-divisible group of $A$.

\subsection*{Acknowledgements}
 We are grateful to Ananth Shankar for suggesting this project and for many helpful discussions regarding this paper. We thank Asvin G. for a helpful conversation regarding Lemma \ref{isogunrami} of the paper. We thank the referee for a careful reading of the draft and  greatly improving the exposition of the paper. The author was partially supported by the NSF grant DMS-2100436.

\section{Injectivity of the special fibre map}\label{sec1}
 Let $\widetilde{A}$ be a simple abelian variety over $L = W(\F_q)(1/p)$ with good reduction. Let $A$ denote the reduction mod $p$ of its N\'eron model. We further assume that $\End^0(A) = K = \End^0(\widetilde{A})$. Choosing an embedding of $L$ into $\C$ induces a primitive CM type $\Phi$ on $K$ determined by $\widetilde{A}$, base changed to $\C$. Fixing one such CM type $\Phi$ on $K$ determines the isogeny class of $\widetilde{A}$ over an algebraically closed field of characteristic $0$ (see Proposition 1.5.4.1 in \cite{complexmultlift}). In fact, we have the following lemma: 
  \begin{lem}\label{isogunrami}
Let $\widetilde{A}$ (base changed to $L^{\prime} = W(k)(1/p))$ be as above and $\widetilde{B}$ be another abelian variety defined over $L^{\prime}$ such that they have good reduction at the place $v$ of $L^{\prime}$. Suppose that they induce the same CM-type over the algebraically closed field $\overline\Q_p$. That is, they are isogenous over $\overline\Q_p$. Then they are isogenous over $L^{\prime}$.
\end{lem}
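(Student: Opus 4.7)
The plan is to start from an isogeny $\phi : \widetilde{A}_{\overline{\Q}_p} \to \widetilde{B}_{\overline{\Q}_p}$, which is provided by the hypothesis, and to descend it to $L^{\prime}$ via Galois descent, crucially using that $L^{\prime}$ is maximal unramified together with the rigidity of morphisms of abelian schemes over a DVR. The isogeny $\phi$ is defined over some finite extension $F$ of $L^{\prime}$ inside $\overline{\Q}_p$. Since $L^{\prime}$ is the maximal unramified extension of $\Q_p$, the extension $F/L^{\prime}$ is automatically totally ramified, so the residue field of $\cO_F$ is still $k$.

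By the good reduction hypothesis, the N\'eron models $\mathscr{A}$, $\mathscr{B}$ of $\widetilde{A}$, $\widetilde{B}$ over $W(k)$ are abelian schemes; their base changes to $\cO_F$ are the N\'eron models of $\widetilde{A}_F$, $\widetilde{B}_F$, so by N\'eron's mapping property $\phi$ extends uniquely to a morphism (in fact an isogeny) $\phi : \mathscr{A}_{\cO_F} \to \mathscr{B}_{\cO_F}$. Reducing modulo the maximal ideal of $\cO_F$ produces an isogeny $\bar{\phi} : A \to A'$ over $k$, where $A'$ is the special fibre of $\mathscr{B}$.

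For any $\sigma \in \Gal(F/L^{\prime})$, the conjugate $\sigma(\phi)$ is another element of $\Hom_{\cO_F}(\mathscr{A}_{\cO_F}, \mathscr{B}_{\cO_F})$, and its reduction modulo the maximal ideal is again $\bar{\phi}$, because $\sigma$ acts trivially on the residue field $k$. Rigidity of morphisms of abelian schemes over a connected base --- the locus in $\Spec \cO_F$ where two such morphisms agree is open and closed, so if it contains the closed point it is everything --- then forces $\sigma(\phi) = \phi$. Hence $\phi$ is $\Gal(F/L^{\prime})$-invariant on the generic fibre, and Galois descent produces a morphism $\widetilde{A} \to \widetilde{B}$ over $L^{\prime}$, which is an isogeny since base change to $\overline{\Q}_p$ recovers the original $\phi$.

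The only non-formal ingredient is the rigidity statement, which is a classical consequence of the fact that the kernel of the difference of two morphisms of abelian schemes is a closed abelian subscheme whose image in the base is clopen. Conceptually, the argument is short precisely because $L^{\prime}$ is maximal unramified: every finite extension is totally ramified with the same residue field $k$, so conjugation by $\sigma$ is invisible on the special fibre, and rigidity transports this triviality back to the generic fibre. The main obstacle, modest as it is, is ensuring that one really has a clean extension of the isogeny to the N\'eron models over $\cO_F$; this is the only place where the good reduction hypothesis is used.
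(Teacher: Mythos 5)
Your proof is correct and takes essentially the same route as the paper: both arguments reduce to the special fibre over $k$ via the injectivity of the reduction map $\Hom(\widetilde{A}_F, \widetilde{B}_F) \hookrightarrow \Hom(A,A')$ (which is exactly the rigidity statement you cite), observe that the Galois group of the totally ramified extension $F/L^{\prime}$ acts trivially on the special-fibre side, and descend. You simply spell out the rigidity over $\Spec\cO_F$ and the Galois-equivariance of reduction more explicitly than the paper, which compresses these steps into the phrase that inertia acts trivially on $\Hom(\widetilde{A}_k, \widetilde{B}_k)$ and hence on $\Hom(\widetilde{A}_{F}, \widetilde{B}_{F})$.
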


\begin{proof}
 Let $\bar{L}^{\prime}$ denote the algebraic closure of $L^{\prime}$ and $I_{\overline{v}}$ denote the inertia group at the place $\overline{v}$ where $\overline{v}$ is the extension $v$ to $\bar{L}^{\prime}$. Let $\widetilde{A}$ and $\widetilde{B}$ be isogenous over a finite extension of $L^{\prime}$. Since $\widetilde{A}$ and $\widetilde{B}$  both have good reduction, we see that $\Hom_{\bar{L^{\prime}}}(\widetilde{A}, \widetilde{B})\hookrightarrow \Hom_{k}(\widetilde{A}_k, \widetilde{B}_k)$. Here $\widetilde{A}_k$ (respectively $\widetilde{B}_k)$ denote the reduction mod $p$ of the N\'eron model of $\widetilde{A}$ (respectively $\widetilde{B}$).  This shows that the Galois action of $\Gal(\bar{L}^{\prime}/L^{\prime})$ on $\Hom_{\bar{L^{\prime}}}(\widetilde{A}, \widetilde{B})$ factors via $\Gal(k/\F_q)$. That is, $I_{\overline{v}}$ acts trivially on $\Hom_{\bar{L^{\prime}}}(\widetilde{A}, \widetilde{B})$. Therefore, $\widetilde{A}$ and $\widetilde{B}$ are isogenous over $L^{\prime}.$
 \end{proof}

Using Grothendieck-Messing theory, we further observe that the reduction map preserves the endomorphism orders. For the proof of the following lemma, we crucially need the abelian varieties to be defined over an unramified extension of $\Q_p$. 

\begin{lem}\label{lift}
Let $\widetilde{A}$ and $A$ be as above. We recall that $\widetilde{A}$ is a lift of $A$ defined over an unramified extension of $\Q_p$. Then $\End\widetilde{A} = \End A.$ 
\end{lem}

\begin{proof}
Let $\widetilde{\mathscr{A}}$ be the N\'eron model of $\widetilde{A}$. We denote by $\End(\widetilde{A}) = \End(\widetilde{\mathscr{A}}) = R$ and $\End(A) = S$. 
We know that taking the special fiber is a faithful functor from the category of abelian varieties over $W(k)$ to abelian varieties over $k.$ Therefore, we have a containment of orders $R\subset S$ inside $K$. We need to show that they are equal. Let $A[p^{\infty}]$ be the $p$-divisible group of $A$ and $\mathbb{D}(A[p^{\infty}])$ over $W(k)$ be its Dieudonn\'e module. By Grothendieck-Messing theory (see \cite{gm} Chapter 5, Theorem 1.6), the lifts of $A$ to $W(k)$ are in bijection with saturated submodules of $\mathbb{D}(A[p^{\infty}])$ over $W(k)$ that lift the filtration $\ker F\Mod p\subseteq\D(A[p^{\infty}]) \Mod p$ where $F$ is the Frobenius on $\mathbb{D}(A[p^{\infty}])$. Let $\mathcal{F}il$ be the  filtration corresponding to $\widetilde{A}$. We note that all the endomorphisms of $A$ that lift to $\widetilde{A}$ are the ones that leave $\mathcal{F}il$ invariant. Let $\widetilde\beta\in S.$ Since $R$ and $S$ are orders in $K$, their quotient is finite. Thus $n\widetilde\beta\in R$ for some positive integer $n.$  This implies that $\mathcal{F}il$ is invariant under $n\widetilde\beta$. As $\mathcal{F}il$ is saturated, we conclude that it is invariant under $\tilde{\beta}$ as well. Hence $\widetilde{\beta}\in R.$
\end{proof}     
The main result of this section is Proposition \ref{inj} which provides an injection between the isogeny class of $\widetilde{A}$ over $L^{\prime}$ to the isogeny class of its reduction mod $p$. The reader may wish to skip to the proposition directly. Its proof involves Fontaine's results on classification of finite flat group schemes, which we first review below. 
\subsection{Classification of finite flat group schemes of prime power order.} We follow the exposition and results in \cite{BC} Section 1, and state the results that we apply later. Let $\sigma:W(k)\rightarrow W(k)$ be the Frobenius morphism and denote by $D_k := W(k)[F,V]$, the Dieudonn\'e ring generated by two commuting elements, the Frobenius $F$ and the Verschiebung $V$ with the relations $FV=VF=p, Fa = \sigma(a)F,$ and $aV = \sigma(a)V$ for $a\in W(k).$ We recall that $p>2$.
\begin{defn}[Finite Honda systems]\label{honda} The category of \textit{finite Honda systems} over $W(k)$ consists of 
tuples $(\mathscr{L},\mathscr{M})$ where $\mathscr{L}$ and $\mathscr{M}$ are $D_k$-modules of finite $W(k)$-length such that the following are satisfied:

\begin{enumerate} 
\item $\mathscr{L}\subset \mathscr M$ is a $W(k)$-submodule of $\mathscr{M}$. 
  \item The $k$-linear map $\mathscr{L}/p\mathscr{L}\rightarrow \mathscr{M}/F\mathscr{M}$ is an isomorphism.
    \item The Verschiebung $V$ is injective on $\mathscr{L}$.
\end{enumerate}
\end{defn}
\begin{prop}(Fontaine, see \cite{BC} Lemma 1.3)\label{prop2fon} The category of finite Honda systems over $k$ is abelian. Let $\delta: (\mathscr{L}_1 ,\mathscr{M}_1)\rightarrow (\mathscr{L}_2,\mathscr{ M}_2)$ be a morphism in this category. Then the kernel is given by $\ker(\delta) := (\mathscr{L}^{\prime}, \mathscr{M}^{\prime})$ where $\mathscr{M}^{\prime} :=\ker(\mathscr{M}_1\rightarrow \mathscr{M}_2)$ and $\mathscr{L}^{\prime} :=\mathscr{ L}_1 \cap \mathscr{M}^{\prime}$. The cokernel is $\coker(\delta) := (\mathscr{L}^{\prime\prime}, \mathscr{M}^{\prime\prime})$ where $\mathscr{M}^{\prime\prime} :=\coker(\mathscr{M}_1\rightarrow \mathscr{M}_2)$ and $\mathscr{L}^{\prime\prime}$ is the image of the composition of maps of $W(k)$-modules
$\mathscr{L}_2\rightarrow \mathscr{M}_2 \rightarrow \mathscr{M} ^{\prime\prime}.$
\end{prop}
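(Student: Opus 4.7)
The plan is to deduce the abelian structure on finite Honda systems by checking that the proposed kernel and cokernel formulas produce honest Honda systems, and then exploiting the fact that they are the kernel and cokernel computed in a strictly larger ambient category that is visibly abelian. The ambient category to use is the category of pairs $(\mathscr{L}\hookrightarrow\mathscr{M})$ where $\mathscr{M}$ is a $W(k)[F,V]$-module of finite $W(k)$-length and $\mathscr{L}$ is merely a chosen $W(k)$-submodule — no compatibility with $F,V$ imposed on $\mathscr{L}$. This ambient category is plainly abelian, with kernels and cokernels computed pointwise in the $W(k)[F,V]$-module category, and the formulas then match those in the statement. The real task therefore reduces to two verifications: that conditions (1)–(3) of Definition \ref{honda} are preserved under the proposed kernel and cokernel.

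For the kernel $(\mathscr{L}',\mathscr{M}')$ of $\varphi : (\mathscr{L}_1,\mathscr{M}_1) \to (\mathscr{L}_2,\mathscr{M}_2)$, condition (1) holds by construction and condition (3) is inherited, since $V$ is already injective on the larger $\mathscr{L}_1$. The content is condition (2), bijectivity of the induced map $\mathscr{L}'/p\mathscr{L}' \to F\mathscr{M}'/p\mathscr{M}'$. My plan is to compare against the existing isomorphism $\mathscr{L}_1/p\mathscr{L}_1 \simeq F\mathscr{M}_1/p\mathscr{M}_1$ via restriction to the subobject $\mathscr{M}' \subset \mathscr{M}_1$: given $m' \in \mathscr{M}'$, lift $Fm'$ through the iso for $\mathscr{M}_1$ to some $\bar\ell \in \mathscr{L}_1/p\mathscr{L}_1$; then the vanishing $\varphi(Fm') = 0$, combined with the iso for $(\mathscr{L}_2,\mathscr{M}_2)$, forces $\varphi(\bar\ell) = 0$ modulo $p\mathscr{L}_2$, so that a representative can be adjusted into $\mathscr{L}_1 \cap \mathscr{M}' = \mathscr{L}'$. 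Injectivity is the symmetric argument, using condition (2) for $\mathscr{M}_1$ to push elements with $F\ell \in p\mathscr{M}'$ into $p\mathscr{L}_1$, and then $V$-injectivity on $\mathscr{L}_2$ through the relation $FV = p$ to ensure the quotient stays in $\mathscr{L}'$.

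For the cokernel $(\mathscr{L}'',\mathscr{M}'')$, condition (1) is immediate and condition (2) is a direct surjectivity argument using the iso for $(\mathscr{L}_2,\mathscr{M}_2)$. The delicate point is condition (3), $V$-injectivity on $\mathscr{L}''$. An element $\bar\ell \in \mathscr{L}''$ with $V\bar\ell = 0$ in $\mathscr{M}''$ is represented by $\ell \in \mathscr{L}_2$ with $V\ell \in \varphi(\mathscr{M}_1)$; writing $V\ell = \varphi(m_1)$ and using $FV = p$ together with condition (2) for $(\mathscr{L}_1,\mathscr{M}_1)$ to decompose $m_1$ as $F\ell_1 + pm_1'$ with $\ell_1 \in \mathscr{L}_1$, one finds that $V(\ell - \varphi(\ell_1)) \in p\mathscr{L}_2$, and concludes $\ell = \varphi(\ell_1)$ in $\mathscr{L}''$ via $V$-injectivity on $\mathscr{L}_2$.

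Once conditions (1)–(3) are verified, abelianness follows formally: binary biproducts are inherited from the ambient category, monomorphisms and epimorphisms in the category of Honda systems coincide with those in the ambient category (since the forgetful functor is fully faithful), and the canonical comparison $\coker(\ker\varphi) \to \ker(\coker\varphi)$ is an isomorphism because it already is so in the ambient abelian category. The main obstacle I expect is condition (2) for the kernel together with condition (3) for the cokernel: both require delicate lifting arguments that combine the isomorphism $\mathscr{L}/p\mathscr{L} \simeq F\mathscr{M}/p\mathscr{M}$ with the $V$-injectivity on $\mathscr{L}$, bridged at each step by the single identity $FV = VF = p$.
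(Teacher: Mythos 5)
A preliminary remark: the paper does not prove this proposition at all; it is quoted from Fontaine (via \cite{BC}, Lemma 1.3), so your attempt can only be judged on its own merits. On those merits there is a genuine gap, and it sits exactly at the heart of the statement. Your strategy is to verify that the displayed kernel and cokernel satisfy conditions (1)--(3) and then to get abelianness ``formally'' from an ambient category of pairs $(\mathscr{L}\subseteq\mathscr{M})$ with no Honda axioms imposed. But that ambient category is \emph{not} abelian. It does have kernels and cokernels, and they are given precisely by the formulas in the statement (note they are not ``pointwise'': the pointwise cokernel $\mathscr{L}_2/\varphi(\mathscr{L}_1)$ need not inject into $\mathscr{M}''$, which is why $\mathscr{L}''$ is taken to be the image of $\mathscr{L}_2$ in $\mathscr{M}''$), but the canonical map from coimage to image fails to be an isomorphism in it: for $M\neq 0$ the morphism $(0,M)\to(M,M)$ given by the identity on $M$ is monic and epic with kernel and cokernel both $(0,0)$, so its coimage is $(0,M)$ and its image is $(M,M)$. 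Hence your final step --- ``the comparison $\coker(\ker\varphi)\to\ker(\coker\varphi)$ is an isomorphism because it already is so in the ambient abelian category'' --- is exactly the assertion that fails without the Honda axioms. The real content of the proposition is the strictness statement that for a morphism of finite Honda systems one has $\varphi(\mathscr{L}_1)=\mathscr{L}_2\cap\varphi(\mathscr{M}_1)$ (equivalently, coimage $=$ image), and this must be deduced from conditions (2) and (3); your outline never proves it.

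Relatedly, the two verifications you flag as the main work are sketched in a way that quietly assumes the same strictness. For the kernel, the ``adjustment'' of a representative $\ell\in\mathscr{L}_1$ with $\varphi(\ell)\in p\mathscr{L}_2$ into $\mathscr{L}_1\cap\mathscr{M}'$ is precisely the nontrivial step and is not justified; for the cokernel, the chase is garbled (condition (2) does not give a decomposition $m_1=F\ell_1+pm_1'$ with $\ell_1\in\mathscr{L}_1$, and $V\ell-\varphi(\ell_1)$ is conflated with $V(\ell-\varphi(\ell_1))$, which matters since $V$ is only additive up to semilinearity and the two differ by exactly the term you need to control). Also, ``monos and epis agree with those of the ambient category because the inclusion is fully faithful'' is not a valid inference. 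So the architecture needs to change: either carry out Fontaine's direct verification, in which the axioms are used both to check that the displayed pairs are again finite Honda systems and to prove the strictness $\varphi(\mathscr{L}_1)=\mathscr{L}_2\cap\varphi(\mathscr{M}_1)$, or argue through the anti-equivalence with finite flat group schemes over $W(k)$ for $p>2$ --- but then abelianness of that category (Raynaud/Fontaine, low ramification $e<p-1$) is itself the nontrivial input, not a formality.
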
    
\begin{prop}(Fontaine, see Theorem 1.4 \cite{BC})\label{prop1fon}
For $p>2$, there is an anti-equivalence between the category of finite flat commutative group schemes $G$ over $W(k)$ of $p$-power order and the category of finite Honda systems over $W(k)$.
\end{prop}
Under this equivalence, $G$ maps to $(\mathscr{L},\mathscr{M})$ where $\mathscr M$ is the  Dieudonn\'e module of the special fiber which we denote by $G_k$ and $\mathscr{L}$ is a $W(k)$ sub-module of $\mathscr M$ satisfying the three properties in Definition \ref{honda}. See, for example \cite{BC}, Section $1$ for a summary of Fontaine's results. We next record the following lemma.
\begin{lem}\label{sameker}
Let $\mathscr{A}$ be an abelian scheme over $W(k)$ with special fibre $A$.
Let $G_{1}$ and $G_{2}$ be two finite flat subgroup schemes of $\mathscr{A}$ over $W(k)$. We assume that the special fibre of both $G_1$ and $G_2$ is $H\subset A$ over $k$. Then $G_{1}$ equals $G_{2}$ as subgroup schemes of $\widetilde{\mathscr{A}}$.

\end{lem}
\begin{proof} 
  We can embed $G_{1}$, $G_{2}$ in  $\mathscr{A}[m]$, the kernel of multiplication by $m$ map, for some positive integer $m>1$. Thus, it is enough to consider the cases of group schemes with order prime to $p$ and $p$-power separately. For $(m,p)=1$, the reduction mod $p$ map induces an isomorphism on the $m$-torsion. Therefore, the prime-to-$p$ subgroups lift uniquely. When $m=p^r$, we associate the tuples $(\mathscr{L}, \mathscr{M}), (\mathscr{L}_1, \mathscr{M}_1)$ and $(\mathscr{L}_2, \mathscr{M}_2)$ to the group schemes $\widetilde{\mathscr{A}}[p^r], G_{1}$ and $G_{2}$ respectively, under the anti-equivalence in Proposition \ref{prop1fon}.  Since $G_1$ and $G_2$ reduce to the same group scheme over $k,$  we see that $\mathscr{M}_1 =\mathscr{M}_2$ as they correspond to the Dieudonn\'e module of the special fibre $H$. 
Furthermore, the tuples $(\mathscr{L}_1, \mathscr{M}_1)$ and $(\mathscr{L}_2, \mathscr{M}_2)$ are quotients of $(\mathscr{L}, \mathscr{M})$ in the category of finite Honda systems. By Proposition  \ref{prop2fon} we see that $\mathscr{L}_1 = \mathscr{L}_2$ as they are given by the image of the map $\mathscr{L}\rightarrow \mathscr{M}\rightarrow \mathscr{M}_1 = \mathscr{M}_2$ This shows that the group schemes $G_{1}=G_{2}$ as subgroup-schemes of $\widetilde{\mathscr{A}}[p^r]$. 
\end{proof}

We will now prove the following proposition.
\begin{prop}\label{inj}

Let $\widetilde{A}$ and $\widetilde{B}$ be abelian varieties over $L^{\prime}$ that induce the same CM type over $\overline\Q_p$. Moreover, assume that $\widetilde{A}$ and $\widetilde{B}$ both reduce mod $p$ to an abelian variety $A$ over $k$.
Then $\widetilde{A}$ and $\widetilde{B}$ are isomorphic over $L^{\prime}$.
\end{prop}
\begin{proof}
Since $\widetilde{A}$ and $\widetilde{B}$ have the same CM type, we get an isogeny $u: \widetilde{A}\rightarrow \widetilde{B}$ over $L^{\prime}$ by Lemma \ref{isogunrami}. As both $\widetilde{A}$ and $\widetilde{B}$ reduce to $A$ over $k$, the morphism $u$ reduces to an endomorphism $u_0:A\rightarrow A$ over $k$. By Lemma \ref{lift}, $u_0$ lifts to an endomorphism $v: \widetilde{A}\rightarrow \widetilde{A}$. Let $\overline{u}:\mathscr{A}\rightarrow\mathscr{B}$ and $\overline{v}:\mathscr{A}\rightarrow\mathscr{A}$ be the unique $W(k)$-morphisms extending $u$ and $v$ respectively to their N\'eron models over $W(k)$. By Lemma \ref{sameker}, we get $\ker(\overline{u})= \ker(\overline{v})$ as they both reduce to $\ker (u_0)$ mod $p$. Now, $\ker (u)\subseteq\ker (v)$ implies that there exists a map $ s:\widetilde{B}\rightarrow \widetilde{A}$ such that 
$s\circ u = v$. Similarly, the other way containment gives us a map $t:\widetilde{A}\rightarrow \widetilde{B}$ such that $t\circ v = u$. Finally, we check that $ s\circ t$ and   $t\circ s$ act as identity when we compose them with $v$ and $u$ respectively, thus completing the proof of the proposition.
\end{proof}

\section{Local conditions for the  endomorphism ring}\label{localcon}
In this section, we compute the local conditions for the endomorphism ring of an abelian variety $A$ over $\F_q$ with the conditions given in Section \ref{mainthm}. We recall that the Newton polygon of $A[p^{\infty}]$ has slopes $1/g$ and $(g-1)/g$. Moreover, we assume that the prime $p$ splits in $\o_K$ such that $K\otimes \Q_p = K^{\prime}\oplus K^{\prime}$ where $K^{\prime}$ is the unramified extension of $\Q_p$ of degree $g$.  We next find the local conditions of the orders that can occur as an endomorphism ring in the isogeny class of $A$ over $\F_{q}.$ 
\subsection{Local conditions at the prime $p$}
\begin{prop}\label{localcondn}
Let $A$ be an abelian variety over $\F_q$ with endomorphism algebra $K = \Q(\alpha)$ where $\alpha$ is the Weil-$q$ number of $A$. Let $\End(A) = R.$ Then the order $R$ is maximal at $p$. More precisely, $R\otimes \Z_p =\o_{K^{\prime}}\oplus \o_{K^{\prime}}$ where $\o_{K^{\prime}}$ is the maximal order in  $K^{\prime}$. 

\end{prop}

\begin{proof}
 Let $\D(A[p^{\infty}])$ denote the Dieudonn\'e module of $A[p^{\infty}]$. Note that $\D(A[p^{\infty}])$ admits a decomposition as a direct sum $\D(A[p^{\infty}])\otimes \Q_p = \D_{1}\times \D_2$ where $\D_1$ and $\D_2$ are modules over $K^{\prime}$. This further gives us a decomposition of the $p$-divisible group $A[p^{\infty}]\simeq \G_{1}\times \G_{2}$ over $\F_q$ where $\G_{1}$ and $\G_{2}$ have slopes $1/g$ and $(g-1)/g$ respectively (see Theorem 9.2 in \cite{abvaroort}), and $\G_{2}$ is the dual of $\G_1$.   Since $\G_{1}$ cannot be isogenous to $\G_{2}$, we see that $\End(\G_{1}\times \G_{2})\simeq \End(\G_{1})\times \End( \G_{2})$.  From Tate's theorem we get $R\otimes \Z_p \simeq \End A[p^{\infty}]$.  Therefore, it is sufficient to prove that $\End(\G_{1})$ and $\End(\G_{2})$ are maximal. We now follow the proof in \cite{AA}, Proposition $2.1$. Let $\G'$ be a $p$-divisible group isogenous to $\G_{1}$ such that $\End(\G')$ is maximal in its field of fractions. Let $j:\G_0\rightarrow \G_{1}$ be any arbitrary isogeny. Since $\G_{1}$ is isomorphic to a quotient of $\G_0$ by the kernel of  $j$, we will show that $\End(\G')$ preserves $\ker j,$ proving that $\End(\G')\subseteq \End(\G_{1})$. Since  $\G_{0}$ is a one dimensional $p$-divisible group, it has a unique subgroup of order $p^r$ for each $r\geq 1$.  Therefore, $\End(\G^{\prime})$ preserves the kernel of $j$. The same argument with the dual of $\G_{1}$ proves that $\End(\G_{2})$ is maximal.
 \end{proof}
\subsection{Local conditions away from $p$}

The above proposition proves that if $R$ is an order that occurs  as an endomorphism ring of an abelian variety isogenous to $A$, then $R$ must be maximal at $p.$ Therefore, it differs from any order that is maximal at $p$, at only finitely many primes away from $p.$ Let $l\neq p$ be any arbitrary prime. As $R$ contains the Frobenius endomorphism, the local order $\Z_l[\alpha]$ is contained in $R\otimes\Z_l$ where $\alpha$ is the Weil-$q$ number of $A.$ In fact, we have the following lemma that proves that we can always adjust the local conditions of an order at $l\neq p$ up to isogeny. Thus, it guarantees the existence of an abelian variety over $\F_{q}$ in the isogeny class with endomorphism ring $R$ with specified local conditions at primes other than $p$.  

\begin{lem}\label{adjustendoprimetop}
Let $A$ be an abelian variety over $\F_q$ with endomorphism algebra $K = \Q(\alpha)$ as above. Let $R$ be an order in $K$ containing $\Z[\alpha].$ Then we can find an abelian variety $A'$ isogenous to $A$ over $\F_q$ such that $\End(A')\otimes \Z_l =R\otimes \Z_l$ for all primes $l\neq p.$
\end{lem}

 The proof is already given in \cite{waterhouse}, Porism 4.3. For completeness, we present it here as well. We denote by $T_l(A)$ the Tate module of $A$ and let $V_l(A) = T_l(A)\otimes \Q_l.$

\begin{proof}
Let $\End(A) = S$. Without loss of generality, we assume $S\subseteq R$. By Proposition \ref{localcondn}, we know that $R\otimes \Z_p = S\otimes \Z_p = \o_{K^{\prime}}\oplus \o_{K^{\prime}}$. This shows that the index of $S$ in $R$ is not divisible by $p$.  Therefore, it is enough to consider primes $l\neq p$.  For primes larger than the index of $S$ in $R$, the two orders are locally the same. Let $l_1,\dots , l_m$ be finitely many primes where the two orders are not the same. According to Tate's theorem $S\otimes \Z_l$ preserves the lattice $T_l(A)$ in $V_l(A)$ for all primes $l.$ Since $K$ is commutative, $V_l(A)$ is a rank one module over $K\otimes \Q_l.$ Therefore, for any order in $K$, there exists a lattice with that prescribed order as its endomorphism ring. Let $T_{l_1}(A)$ be a sub-lattice of $T_l(A)$ that is preserved by the order $R\otimes \Z_{l_1}$. The quotient of the two lattices corresponds to a finite index subgroup of $A.$ We quotient $A$ by this subgroup to get an abelian variety $A'$ isogenous to $A$ such that $T_{l_1}(A') = T_{l_1}(A).$ Now repeat this process for all the other primes to finish the proof. 
\end{proof}

For each positive integer $n$, we count the isomorphism classes of abelian varieties in $I(A, \F_{q^n})$ with endomorphism ring $R_n$ characterized by the local conditions summarized below. This gives a lower bound on the size of $I(A, \F_{q^n})$. We note that the order $R_n$ is the ``smallest possible" at the primes $l\neq p$ and satisfies the necessary condition at the prime $p$ as proved in Proposition \ref{localcondn}.

\begin{defn}\label{smallestend}
Let $n$ be a positive integer. We define $R_n\subset K = \Q(\alpha)$ as the order satisfying the following conditions:
 \begin{enumerate}
     \item The order $R_n$ contains the order $\Z[\alpha^n, (q/\alpha)^n]$.
     \item It is maximal at $p,$ that is, $R_n\otimes \Z_p = \o_{K^{\prime}}\oplus \o_{K^{\prime}}$.
     \item At the prime $l,$ we have $R_n\otimes \Z_l = \Z_l[\alpha^n].$
 \end{enumerate}

\end{defn}

\section{Properties of abelian varieties of Lubin-Tate type.}\label{propring}
  In this section, we note certain properties of abelian varieties with endomorphism ring characterized in Definition \ref{smallestend} that will be useful to us in subsequent sections.
\subsection{Field of definition}

\begin{lem}(First property)\label{descend}
Let $n$ be a positive integer and $B$ be an abelian variety over $k$ with endomorphism ring $R_n$ as defined in Definition \ref{smallestend}. Then $B$ is defined over $\F_{q^n}$.
\end{lem}

\begin{proof}
Without loss of generality, we prove this result for $n=1.$ We write $R_1=R.$ As $\Z[\alpha,q/\alpha]\subset R\subset  \End^0(A)$, we can assume that $B$ is geometrically isogenous to an abelian variety $B^{\prime}$ defined over $\F_q$ with $q$-Frobenius $\alpha$. As the endomorphism of $B^{\prime}$ is already maximal at $p$, by Lemma \ref{adjustendoprimetop}, we may replace $B^{\prime}$ by an isogenous abelian variety over $\F_q$ that has endomorphism by the maximal order so that $R\subset \End B^{\prime}$. Let $\varphi_{B'}: B^{\prime}\rightarrow B$ be such an isogeny. Therefore, the kernel of $\varphi_{B'}$ is invariant under $R$. In particular, it is invariant under the action of $\alpha$, the $q$-Frobenius of $B^{\prime}$. This implies that the kernel is also Galois invariant under $\Gal(k/\F_q)$ because it acts by the Frobenius of $B^{\prime}$. This implies that $B$ is defined over $\F_q.$ 
 \end{proof}
 
\subsection{Central leaf}\label{centralleaf}
Let $D_k:=W(k)[F,V]$ denote the Dieudonn\'e ring as before and $\mathbb{D}(-)$ denote the contravariant Dieudonn\'e functor. For any positive integers $a$ and $b$, we write $\G_{a,b}$ for the $p$-divisible group over $k$ corresponding to the Dieudonn\'e module $\mathbb{D}(\G_{a,b}):= D_k/D_k(F^a - V^b)$ guaranteed by the Dieudonn\'e theory. We note that under this characterization, the $p$-divisible group $\G_{a,b}$ will have dimension $a$ and height $a+b.$ The Dieudonn\'e-Manin classification tells us that every $p$-divisible group over $k$ is isogenous to a product of $\G_{a,b}$ as $a$ and $b$ vary over positive integers.  In fact, in dimension one we have the following stronger uniqueness result.

\begin{lem}\label{uniquepdiv}
For any positive integer $h$, there exists a unique $p$-divisible group of dimension one and height $h$ over $k$. Consequently, considering its dual $p$-divisible group, the same is true for a $p$-divisible group of co-dimension one and height $h$.
\end{lem}

\begin{proof}
This result is well known. Let $\G$ be a $p$-divisible group of dimension one and height $h$ over $k$. Let $\D(\G)$ be its Dieudonn\'e module. Using the Dieudonn\'e-Manin classification, we know that $\G$ is isogenous to $\G_{1,h-1}$ over $k$. We need to show that they are isomorphic. We first note that $\D(\G)$ and $\D(\G_{1,h-1})$ have rank $h$ as $W(k)$-modules. We need to show that they are isomorphic as $D_k$-modules. We show this by constructing a basis for $\D(\G)$ over $W(k)$ that shows the uniqueness of the action of Frobenius and Verschiebung on $\mathbb{D}(\mathscr{G})$. By abuse of notation, we denote them by $F$ and $V$ as well. As $\mathscr{G}$ is isogenous to $\mathscr{G}_{1,h-1}$ over $k$, it is a simple Dieudonn\'e module. That is, it does not contain any $W(k)$-submodules stable under the action of $F$ and $V$. As $\D(\G)/F(\D(\G))$ is a one dimensional vector space over $k$ it is spanned by (the image of) an element, say $e_1$ in $\D(\G).$ Now, let $Fe_1 = e_2.$ We note that $Ve_2 = pe_1$ and $e_2$ is not in the $W(k)$-span of $e_1$. Otherwise, $\mathbb{D}(\mathscr{G})$ will have a one-dimensional $W(k)$-submodule spanned by $e_1$ which is stable under the action of $F$ and $V$. Next, we let $Fe_2=e_3$. Similarly, we see that $e_3$ cannot lie in the $W(k)$-span of $e_1$ and $e_2.$ Otherwise, we get a rank two sub-Dieudonn\'e module contained in $\mathbb{D}(\mathscr{G})$. We continue the same process until we get a basis of $\mathbb{D}(\mathscr{G})$ such that $F$ acts as $Fe_i = e_{i+1}$ for $1\leq i\leq h-1$. Finally, write $Fe_{h} = a_1e_1+a_2e_2+\dots +a_{h}e_{h}$. Since $e_1$ is not in the image of $F$ and $Fe_h=0$ in $\mathbb{D}(\mathscr{G})/F(\mathbb{D}(\mathscr{G}))$, we see that $a_1=p$. Using the fact that $VF=FV=p$, we see that $Fe_{h} =pe_1$. This gives us an isomorphism between $\G$ and $\G_{1,h-1}$. 
\end{proof}
 
\begin{defn}(Central leaves) Let $\G$ be a $p$-divisible group over $\F_q.$ Then the $\F_{q^{n}}$ points of the central leaf of $\G$ in $\mathcal{A}_{g,1}\otimes \F_q$ are defined as
$$c(\G)(\F_{q^n}) = \{B\in \mathcal{A}_{g,1}(\F_{q^n})\mid \G\otimes k \simeq B[p^{\infty}]\otimes k\}.$$
Let $A\in \mathcal{A}_{g,1}(\F_q)$ be a principally polarized abelian variety. The central leaf of $A$ denoted as $c(A)$ is defined as the central leaf associated to its $p$-divisible group $A[p^{\infty}]$. We note that $c(A)$ cuts out a closed sub-variety in the Newton stratum of $A$.
\end{defn}

\begin{lem}\label{uniquecf}(Second property)
Let $n$ be a positive integer. Let $B$ be an abelian variety over $\F_{q^n}$ satisfying the conditions mentioned at the beginning of Section \ref{localcon}. Let $\End(B) = R_n\subset K$. Then $B$ lies in the central leaf of the $p$-divisible group $\G_{1,g-1}\times \G_{g-1,1}$.
\end{lem}
\begin{proof}
Applying Proposition \ref{localcondn} to the abelian variety $B$, we see that $\End(B[p^{\infty}])\otimes k  = \o_{K^{\prime}}\oplus \o_{K^{\prime}}$. This implies that its Dieudonn\'e module $\D(B[p^{\infty}])\otimes k$ splits into a direct sum $\D_1\oplus \D_2$ such that $\End \D_1$ and $\End \D_2$ are maximal at $p.$ This gives the desired decomposition of $B[p^{\infty}]\otimes k$ into a product $\G_{1}\times \G_{2}$.  By the Dieudonn\'e-Manin classification $\G_{1}\times \G_{2}$ is isogenous to $\G_{1,g-1}\times \G_{g-1,1}$. Let $\psi$ be such an isogeny. Since the endomorphism ring preserves the kernel of $\psi$, it must also factor into a direct summand. This shows that $\G_{1}$ (respectively, $\G_{2}$) is isogenous to $\G_{1,g-1}$ (respectively, $\G_{g-1,1}$). By Lemma \ref{uniquepdiv}, we see that they are isomorphic. This shows that the isogeny $\psi$ is an isomorphism and $B$ lies in the central leaf of $\G_{1,g-1}\times \G_{g-1,1}$.
\end{proof}

\section{Isogeny classes and the class group}\label{count}
 Let $R\subset K$ be any order. Let $I\subset K$ be an invertible $R$ module. That is, $I$ is a fractional ideal of $R$ such that there exists some fractional ideal  $J$ with $IJ = R$. We say that two fractional ideals, $I_1$ and $I_2$ are equivalent if and only if there exists some $a\in K^{\times}$ such that $I_1 = aI_2.$ 
 
 \begin{defn}(Class group of an order)
Under the equivalence mentioned above,  we define the equivalence classes of  fractional ideals of $R$ to be the \textit{class group of $R$}, denoted as $Cl(R).$
 \end{defn}
 
 The heart of the rest of the paper lies in the following proposition that gives us a convenient characterization of (isogenous) abelian varieties over $L^{\prime}$ with endomorphism ring $R_n$ in Definition \ref{smallestend}. Here we recall the setup and notation at the beginning of Section \ref{sec1}. In particular, we return to the set up in characteristic zero with $\widetilde{A}$ an abelian variety over $L= W(\F_q)(1/p)$. We denote by $A$ over $\F_q$ its reduction mod $p$. We recall from Section \ref{sec1} that we fixed a primitive CM type $\Phi$ on $K$ which is determined by $\widetilde{A}$ after choosing an embedding $L'\hookrightarrow \C$.
\begin{prop}\label{clinj}
Let $\Phi$ be the above fixed primitive CM type on $K$. Then for any positive integer $n$, there exists an injective map from the class group of $R_n$ to the set of isomorphism classes of abelian varieties over $L^{\prime}$ isogenous to $\widetilde{A}$ with endomorphism ring $R_n \subset K.$

\end{prop}

\begin{proof}

Let $I$ be a representative of an equivalence class in $Cl(R_n).$ We recall that $\Phi$ gives an isomorphism $K\otimes \C = \C^{\Phi}$ where  $\C^{\Phi}$ denotes $g$ copies of $\C$ indexed by the embeddings in $\Phi.$ Since $I$ is a lattice in $K,$ we can embed it inside $\C^{\Phi}$. We refer to the image of $I$ under this embedding as $\Phi(I).$ The complex torus  $\C^{\Phi}/\Phi(I)$ admits a polarization (see Section \ref{polarization}) and therefore we get an abelian variety over $\C$ which we denote by $A_{\Phi(I)}$. Since $\Phi$ is primitive, $A_{\Phi(I)}$ is simple and by construction $\End(A_{\Phi(I)}) = R_n.$  Furthermore, since $A_{\Phi(I)}$ is a CM abelian variety, it descends, with its entire endomorphism algebra, to a number field. Now, if the abelian variety had CM by the maximal order, then the abelian variety is defined over the maximal unramified extension of the reflex field $K^{*}$ and it would be defined over an unramified extension of $\Q_p.$ In our case, however, the field of definition might be a larger field extension that is a ramified extension of $\Q_p$. We first rule out this possibility for the abelian variety $A_{\Phi(R_n)}.$  We note that $A_{\Phi(R_n)}$ is isogenous to the abelian variety $A^{\prime}$ corresponding to the lattice $\o_K$ in $K$. Now, $A^{\prime}$ has CM by a maximal order over $\overline{\Q}_p.$ Let $F$ be the unramified extension of $\Q_p$ where  $A^{\prime}$ is defined.  The kernel of the isogeny has order equal to the index of $R_n$ in $\o_K$. This, along with the fact that $R_n$ is maximal at $p$ implies that we can embed the kernel inside $A^{\prime}[m]$ for some $(m,p)=1$. The kernel is defined over some extension $F^{\prime}$ of $F$. This must be unramified because  the inertia group at the place $\bar v$ acts trivially on the $m$-torsion and therefore on the subgroup as well. Here, $\bar{v}$ refers to the extension of a place $v$ of $F^{\prime}$ to its algebraic closure. This descends $A_{\Phi(R_n)}$ to an unramified extension of $\Q_p.$ Now let $I$ be a fractional ideal of $R_n$ and let $J:=I\o_K$ be its extension to a fractional ideal of $\o_K$. As $I$ is invertible, $[I:J]=[R_n:\o_K]$. As the abelian variety corresponding to $J$ has endomorphism by the maximal order, the same argument works and descends $A_{\Phi(I)}$ to an unramified extension of $\Q_p.$  All the abelian varieties constructed above are isogenous to $\widetilde{A}_{L^{\prime}}$ over $L^{\prime}$ by Lemma \ref{isogunrami}.  Finally, as the index $[I:J]=[aI:aJ]$ for any $a\in K^{\times}$, the above argument is independent of the choice of the representative. Under the equivalence of  ideals in $Cl(R_n)$, two equivalent ideals would correspond to isomorphic lattices and give us isomorphic abelian varieties over $\overline\Q_p$, and by the argument in Lemma \ref{isogunrami}, over $L^{\prime}$ as well.
\end{proof}

\begin{cor}\label{injclgrpnopol}
Let $n$ be any positive integer. Then there exists an injective map from the class group $Cl(R_n)$ to the set of isomorphism classes of abelian varieties over $\F_{q^n}$ isogenous to $A$ (without any condition on polarization).
\end{cor}
\begin{proof}  By Proposition \ref{inj}, the reduction mod $p$ map from the set of isomorphism classes of abelian varieties isogenous to $\widetilde{A}$ over $L^{\prime}$ to that of $A$ over $k$ is injective. In the proof of Proposition \ref{clinj}, we can choose the place $v$ such that $\End^0(\widetilde{A})=K=\End^0(A)$.  Moreover, for any $n$, by Lemma \ref{descend} the abelian variety constructed using an ideal $I\subset R_n$ is defined over $W(\F_{q^n})(1/p)$ and therefore the reduction mod $p$ of its N\'eron model descends to $\F_{q^n}$. By Lemma \ref{lift}, its endomorphism ring is $R_n$ and therefore has $\alpha^n\in K$ as its Frobenius morphism. 
 \end{proof}
\section{Estimates}\label{estimates}
The goal of this section is to find the asymptotics for the class number of $Cl(R_n)$. We prove the following estimate.
\begin{prop}\label{estimate}
We assume that $1,\alpha =\alpha_1,$ and its conjugates $\alpha_2, \dots \alpha_g$, are multiplicatively independent. Then for a positive density set of positive integers $n$, the class group $|Cl(R_n)| = q^{n{((g^2-2g)/2+o(1))}}.$

\end{prop}
\begin{proof}
Using the analytic class number formula for orders in $\o_K$ (see \cite{classnumberformula}, Theorem 1.1), we can approximate the size of the class group of an order by the square root of its discriminant. Let $\alpha_i$, where $1\leq i\leq 2g$ and $\alpha_1 =\alpha$ be the conjugates of $\alpha.$ For each $i$, we write $\alpha_i =q^{1/2}(\cos{\theta_i}+\iota\sin{\theta_i})$. Then the discriminant of the monogenic order $\Z[\alpha^n]$ is given by $$\disc(\Z[\alpha^n])=\prod_{1\leq i<j\leq 2g}({\alpha^n_i} - {\alpha^n_j})^2 =q^{\frac{2ng(2g-1)}{2}}\prod_{1\leq i<j\leq 2g}(\cos(n\theta_i) + \iota\sin(n\theta_i) - \cos(n\theta_j) - \iota\sin(n\theta_j))^2 $$ Since $\disc(\Z[\alpha^n])= [\Z[\alpha^n]: R_n]^2\disc(R_n)$, it is enough to estimate the index of $\Z[\alpha^n]$ in $R_n.$ We note that the index is $1$ at all primes $l\neq p$ (see Definition \ref{smallestend}, point (3)). Therefore, it is sufficient to estimate the index after tensoring the orders with $\Z_p$.
We do this below.
\subsection{Estimating the index at the prime $p$} We write  $ \Z[\alpha]\otimes \Z_p\subset \o_K\otimes\Z_p = \o_{K^{\prime}}\oplus \o_{K^{\prime}}$. Under this embedding, the image of $\alpha$ is $(\beta, \gamma)$ where $v_{q}(\beta) = 1/g$ and $v_{q}(\gamma)= (g-1)/g$.  The ring $\Z_p[(\beta, \gamma)]$ is monogenic and is isomorphic to $\Z_p[x]/(f_1f_2)$ where $f = f_1f_2$ is the factorization of the minimal polynomial $f$ of $\alpha$ in $\Z_p.$ Let $\beta_i$ and $\gamma_i$ be the roots of $f_1$ and $f_2$ with $\beta_1 = \beta$ and $\gamma_1=\gamma$ and $1\leq i\leq g$. 

Now, for any positive integer $n$, we are reduced to finding the index of $\Z_p[\alpha^n] = \Z_p[(\beta^n, \gamma^n)]$ in $R_n\otimes\Z_p=\o_{K^{\prime}}\oplus \o_{K^{\prime}}$. We compute this as follows: consider the monogenic ring $\Z_p[x]/g_1g_2 := \Z_p[\delta]$ where $\delta = (\beta/q^{1/g},\gamma/q^{(g-1)/g})$ and the roots of $g_1$ and $g_2$ are given by $\beta^{\prime}_i:=\beta_i/q^{1/g}$ and $\gamma^{\prime}_i:=\gamma_i/q^{(g-1)/g}$ respectively where $1\leq i\leq g$. 
Thus, we have the following containment of orders: $$\Z_p[\alpha^n]\subseteq \Z_p[\delta^n]\subseteq \o_{K^{\prime}}\oplus \o_{K^{\prime}}.$$

We first show that for a positive proportion of positive integers $n$, it is enough to compute the index of $\Z_p[\alpha^n]$ in $ \Z_p[\delta^n]$. That is, for most $n$, we do not get any contribution to the main term of the count from the index of $\Z_p[\delta^n]$ in $\o_{K^{\prime}}\oplus \o_{K^{\prime}}$. We justify this by showing that for most positive integers this index is constant up to some unit in $\o_{K'}$. Indeed, the discriminant of $\o_{K^{\prime}}\oplus \o_{K^{\prime}}$ is constant and does not depend on $n$, while the discriminant of $\Z_p[\delta^n]$ can be written as:
$$
\disc (\Z_p[\delta^n]) 
=\disc (\Z_p[\delta]) \cdot c(n)^2
$$
where $$c(n)=\prod_{1\leq i< j\leq g} \frac{({\beta^{\prime}_i}^n- {\beta^{\prime}_j}^n)}{(\beta^{\prime}_i- \beta^{\prime}_j)}\frac{({\gamma^{\prime}_i}^n- {\gamma^{\prime}_j}^n)}{(\gamma^{\prime}_i- \gamma^{\prime}_j)}\prod_{1\leq i\leq j\leq g}\frac{({\beta^{\prime}_i}^n- {\gamma^{\prime}_j}^n)}{(\beta^{\prime}_i- \gamma^{\prime}_j)}\\
$$ For a positive density set of integers $n$, we note that $v_q(c(n))=0$, that is, the discriminant of $\Z_p[\delta^n]$ and $\Z_p[\delta]$ differ only by a unit in $\o_{K'}$. We justify this for one of the terms in the expression of the product, and the argument is similar for the other terms. Consider the following term: 
$$c_{\beta,i,j}(n):=\frac{{(\beta^{\prime}_i}^n- {\beta^{\prime}_j}^n)}{(\beta^{\prime}_i- \beta^{\prime}_j)}$$
We know that $\beta_i^{\prime}$ and $\beta_j^{\prime}$ are units in $\o_{K'}$. Write $a\in \F_q^{\times}$ for the residue class of $\beta_i^{\prime}/\beta_j^{\prime}$ mod $p$. Then the residue class of $\frac{c_{\beta,i,j}(n)}{{\beta_j^\prime}^{n-1}}$ equals $1+a+\dots +a^{n-1}$. Thus, if $a=1$, then for all $n$ co-prime to $p$, this sum is not zero and $c_{\beta,i,j}(n)$ is a unit in $\o_{K'}$. Similarly, if $a\neq 1$, then this sum is not zero when $n\mid \ord(a)$. As long as $n$ is not a multiple of any of the divisors of $q-1$, we see that $c_{\beta,i,j}(n)$ is a unit in this case as well. From this discussion, we conclude that for positive integers $n$ such that $\gcd(n,q(q-1))=1$, the term $c(n)$ is a unit. The density of such integers is $\phi(q(q-1))/q(q-1)$ where $\phi$ denotes the Euler totient function.

For $n$ such that $c(n)$ is a unit, it is enough to compute the index of $\Z[\alpha^n]$ in $\Z[\delta^n]$. This
 is straightforward using the following relation:
$$\disc \Z_p[\delta^n]\cdot [\Z_p[\alpha^n]:\Z_p[\delta^n]]^2 = \disc \Z_p[\alpha^n] $$

The index is given by the expression for $n=1$ and can be raised to the power $n$ for any positive integer $n$.

  $$[\Z_p[\alpha]:\Z_p[\delta]]= \frac{\prod_{1\leq i< j\leq g}(\beta_i - \beta_j)(\gamma_i - \gamma_j)\prod_{1\leq i,j\leq g}(\beta_i - \gamma_j)}{\prod_{1\leq i< j\leq g} (\beta_i /q^{1/g}- \beta_j/q^{1/g})(\gamma_i/q^{(g-1)/g} - \gamma_j/q^{(g-1)/g)})\prod_{1\leq i,j\leq g}(\beta_i /q^{1/g}- \gamma_j/q^{(g-1)/g})}$$

which equals $(q^{1/g})^{g(g-1)/2}\cdot (q^{(g-1)/g})^{g(g-1)/2}\cdot(q^{1/g})^{g^2}  = q^{(g^2+g)/2}$ up to some constant. Hence, for a positive density set of positive integers, we get $[\Z[\alpha^n]: R_n] = [\Z_p[\alpha^n]:\Z_p[\delta^n]] = q^{n(g^2+g)/2}$ up to some constant. 
\subsection{Final estimate}\label{unpolri} 
As in the previous section, we take $n$ such that $\gcd(n, q(q-1))=1$. The expression for the discriminant of $R_n$ (up to some constant) is then given by:

$$(\disc(R_n))^{1/2}=\frac{(\disc(\Z[\alpha^n]))^{1/2}}{[\Z[\alpha^n]:R_n]}=\frac{q^\frac{ng(2g-1)} {2}\prod_{i<j\leq 2g}(\cos(n\theta_i) + \iota\sin(n\theta_i) - \cos(n\theta_j) - \iota\sin(n\theta_j))^2}{q^{n(g^2+g)/2}}$$
 
Let $T(n):=\prod_{i<j\leq 2g}(\cos(n\theta_i) + \iota\sin(n\theta_i) - \cos(n\theta_j) - \iota\sin(n\theta_j))^2$ be the term with sines and cosines in the above expression. We note that $T(n)$ is absolutely bounded above. We bound $|T(n)|$ below as in the proof of Proposition 3.6 in \cite{tsimshank}. We need a Lemma.

\begin{lem}\label{equidistribute}
For $(n, q(q-1))=1$, the sequence $(n\theta_1,\dots ,n\theta_g)$ is equidistributed in $[0,2\pi]^g$.
\end{lem}
\begin{proof}
  We now check Weyl's equidistribution criterion  for the sequence $(n\theta_1,\dots ,n\theta_{g})$ with $n$ such that $\gcd(n,q(q-1))=1$. Let $S_N =\{1\leq n\leq N\mid \gcd(n,q(q-1))=1\}$. Let $m=(m_1,\dots,m_g)\in \Z^r$ and $x =(x_1,\dots, x_g)$ where $x_i= \theta_i/2\pi$. When $q,\alpha,\alpha_2, \dots ,\alpha_g$, are multiplicatively independent we see that $(2\pi,\theta_1,\dots\theta_g)$ are $\Q$-linearly independent. Therefore, $\langle m, x\rangle :=m_1x_1+\cdots +m_gx_g\notin \Q$. We compute:
$$\lim_{N\to \infty}\frac{1}{|S_N|}\sum_{n\in S_N} e^{2\pi i n \langle{m,x}\rangle} = \lim_{N\to \infty}\frac{1}{|S_N|}\sum_{a\in (\Z/q(q-1)\Z)^{\times}}\sum_{\substack{n\in S_N\\ n\equiv a \Mod N}} e^{2\pi ni \langle{m,x}\rangle } \to 0$$

Indeed, for a fixed $a\in (\Z/q(q-1)\Z)^{\times}$, writing $n=a+kl$ and varying $0\leq k\leq \lfloor{(N-a)/l}\rfloor:=M$  we compute the inner sum as a geometric sum: 
$$\sum_{\substack{n\in S_N\\ n\equiv a \Mod N}} e^{2\pi ni \langle{m,x}\rangle }  = \sum_{k=0}^{M} e^{2\pi i (a+kl)\langle{m,x\rangle}} = e^{2\pi i a\langle {m,x}\rangle} \frac{1-e^{2\pi i l(M+1)}}{1-e^{2\pi i l}}$$
Taking absolute values, we see that the geometric sum is bounded above independent of $M$ (and hence, of $N$) for each residue class $a\in (\Z/q(q-1)\Z)^{\times}$. As $|S_N|=\phi(q(q-q))N/ q(q-1)$, we see that the above limit goes to $0$. By \cite{weylequidistribution} Theorem 6.2 in Chapter 1, the lemma follows.
\end{proof}

Write $|T(n)| = \prod_{i<j\leq 2g}|2\sin(n(\theta_i-\theta_j)/2)|$. By Lemma \ref{equidistribute}, for any fixed $\epsilon>0$, the argument $n(\theta_i-\theta_j)/2$ lies in $(\epsilon, \pi-\epsilon)$ for a positive density set of positive integers $n$. Therefore, each term $|\sin(n(\theta_i-\theta_j)/2)|\geq \sin(\epsilon)>1/N_{\epsilon}$. We get that for all $n>N_{\epsilon}$, this term in the product is greater than $1/n$ for a positive density set of integers $n$. Thus, we see that the size of the class group $Cl(R_n)$ is well approximated by the leading term
$q^{\frac{ng(2g -1)}{2}- \frac{n(g^2+g)}{2}} = q^{n(g^2-2g)/2}$.
\end{proof}

\begin{cor}\label{unpol}
For a positive density set of positive integers $n$, the size of the isogeny class of $A$ (without any conditions on polarization) in characteristic zero is greater than or equal to $q^{n(\frac{(g^2-2g)}{2}+o(1))}$.
\end{cor}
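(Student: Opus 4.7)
My plan is to deduce the corollary directly by combining the two main results already established in this section: the injection from $\cl(R_n)$ into the set of isomorphism classes of abelian varieties over $L^{\prime}$ with endomorphism ring $R_n$ inside $(K,\Phi)$ from Proposition \ref{clinj}, and the asymptotic lower bound on $|\cl(R_n)|$ from the preceding proposition.

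First I would recall the observation immediately following Proposition \ref{clinj}: every abelian variety in the image of that injection is isogenous to $\widetilde{A}_{L^{\prime}}$ over $L^{\prime}$, by Lemma \ref{isogunrami}. Hence this injection actually lands inside the unpolarised isogeny class of $\widetilde{A}$ in characteristic zero. Since no polarisation condition is imposed at this stage, nothing further needs to be checked to remain in the unpolarised setting, and the cardinality of the unpolarised isogeny class of $\widetilde{A}$ over $L^{\prime}$ is bounded below by $|\cl(R_n)|$.

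To finish, I would invoke the preceding proposition, which yields
\[
|\cl(R_n)| \;\geq\; q^{n(g^2-2g)/2 + o(1)}
\]
for a density one, and hence positive density, set of positive integers $n$. Composing with the injection produces the required lower bound $q^{n(\frac{g^2-2g}{2}+o(1))}$ on the size of the unpolarised isogeny class of $\widetilde{A}$ over $L^{\prime}$.

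The genuinely technical points---namely the existence of the class-group-to-moduli injection and the descent of the constructed abelian varieties to the unramified base $L^{\prime}$ rather than to a ramified extension of $\Q_p$---are already absorbed into the proof of Proposition \ref{clinj}, so the corollary itself amounts to bookkeeping and no new estimates are needed. If anything, the only item worth double-checking is that the ``density one'' hypothesis on $n$ produced by the discriminant-index calculation suffices for the ``positive density'' conclusion stated here, which it does immediately.
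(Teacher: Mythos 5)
Your proposal is correct and is exactly the argument the paper intends (the corollary is stated without a separate proof precisely because it follows by composing the injection of Proposition \ref{clinj} --- whose image lies in the $L^{\prime}$-isogeny class of $\widetilde{A}$ by the remark invoking Lemma \ref{isogunrami} --- with the lower bound on $|\cl(R_n)|$ from the preceding proposition). Your closing observation that density one trivially implies positive density is also the right, and only, remaining bookkeeping point.
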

\begin{proof}
This follows from Corollary \ref{injclgrpnopol} and the calculation in Proposition \ref{estimate}.
\end{proof}

\section{Polarization}\label{polarization}

 So far, we have an estimate for the size of the isogeny class without specifying any conditions on the polarization.  To restrict this count to the isogeny class in $\mathcal{A}_{g,1},$ we need to impose further conditions for polarization. In \cite{polrihowe}, Howe describes the polarization data for ordinary abelian varieties that is nicely summarized in \cite{tsimshank}, Section 3.1. However, since the data is described purely in characteristic zero and are independent of the type of abelian variety, we use the same description in our case that we describe below.  

\subsection{Polarization data.}
 We work with the CM type $\Phi$ on $K$ as before. For any element $x\in K$, we denote by $\overline{x}$ the complex conjugation (induced by $K$) on $x$. The polarization on an abelian variety corresponding to the equivalence class of an  ideal $I$ in $Cl(R_n)$ is given by a $\Q$-valued bilinear form $\Psi$ on $K$:

$$\Psi: (x,y)\mapsto \tr_{K/\Q}(u x\overline y)$$ satisfying the following properties.
\begin{enumerate}
\item The form $\Psi$ restricted to $I$ is valued in $\Z$.
    \item  The element $u\in K$ is purely imaginary. 
 \item For all $\varphi\in \Phi$, we have $\varphi(u)/i$ is positive. 
 \end{enumerate}
We note that if an ideal $I$ is dual to itself with respect to the above form, then its corresponding abelian variety will be principally polarized. We denote by $I^{\vee}$ the dual of the fractional ideal $I$ with respect to $\Psi$. We have the following proposition. 
\begin{prop}\label{kernorm}
Let $R_n^{+}$ denote the intersection of $R_n$ with the maximal totally real field $K^+\subset K.$ Then the subset of $I(\widetilde{A}, L^{\prime})$ with endomorphism ring $R_n$ is empty or in one-to-one correspondence with the kernel of the norm map:
$$N:Cl(R_n)\rightarrow Cl^{+}(R_n^{+})$$ Here $Cl^{+}(R_n^{+})$ denotes the narrow class group of the order $R_n^{+}$ and the norm map refers to the map $I\mapsto I\overline{I}$.
\end{prop}

\begin{proof}
The proof of this proposition goes through verbatim as in Lemma 3.8 in \cite{tsimshank}.
 \end{proof}

\begin{lem}\label{finalestimate}
For a positive density set of integers $n,$ we have 
$\frac{|Cl(R_n)|}{|Cl^{+}(R_n^{+})|} =q^{(n(g+1)(g-2)/4+o(1))}$

\end{lem}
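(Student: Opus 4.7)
The plan is to mirror the discriminant-based estimate of the previous proposition for the order $R_n^+\subset K^+$, and then take the ratio. Since $K/K^+$ is a CM extension, $R_n^\times$ and $(R_n^+)^\times$ have equal rank $g-1$ and are commensurable, so their regulators cancel in the ratio; moreover $cl^+(R_n^+)$ differs from $cl(R_n^+)$ only by a $2$-torsion group of exponent at most $2^g$, which is $q^{o(n)}$ and so negligible on the exponential scale. By the Brauer--Siegel approximation (implicitly used already in the previous proposition),
$$
\frac{|cl(R_n)|}{|cl^{+}(R_n^{+})|}\;\asymp\;\sqrt{\frac{\disc R_n}{\disc R_n^+}},
$$
so the task reduces to estimating $\disc R_n^+$.

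I would do this in three substeps parallel to the earlier argument, applied to the generator $\gamma := \alpha+q/\alpha\in K^+$, whose $g$ real conjugates are $2q^{1/2}\cos\theta_i$. Step (i): the discriminant of $\Z[\gamma^n]$ factors as a power of $q$ (contributing $q^{ng(g-1)/2}$) times a trigonometric product bounded above, and bounded below on a positive-density set of $n$ by the same equidistribution argument used before. Step (ii): compute the local index $[R_n^+\otimes\Z_p:\Z_p[\gamma^n]]$ at $p$. The key observation is that the CM involution on $K$ swaps the two primes $\p_1,\p_2$ above $p$ (they carry the complementary slopes $1/g$ and $(g-1)/g$), so $p$ is inert in $K^+$, the completion $K^+\otimes\Q_p$ is unramified of degree $g$, and $R_n^+$ is automatically maximal at $p$. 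One then rescales $\gamma$ by an appropriate power of $p$ to produce an element of valuation zero and computes the discriminant shift as in the previous proof. Step (iii): show that the local index is stable for a density-one set of $n$ via the same residue-field and $(n,p)=1$ argument as before.

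Combining these gives $\disc R_n^+\approx q^{nd^+}$ for an explicit $d^+$, and plugging into the displayed ratio yields the claimed power of $q^n$.

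The hard part will be step (ii). Although the set-up is formally parallel to the previous proposition, $\gamma=\alpha+q/\alpha$ is a sum of two terms of distinct valuations in $K\otimes\Q_p$, so one must check carefully that the rescaled element generates the maximal order of the degree-$g$ unramified extension $K^+\otimes\Q_p$ at the residue-field level for a density-one set of $n$. Without this residue-field generation, the local index would be strictly larger than expected and the asymptotic would fail. A subsidiary concern is the equidistribution input: the trigonometric factor is now a product over $g$ angles rather than $2g$, but the Weyl-type argument used in the previous proof still applies since the $\theta_i$ are shared with the Weil numbers of $\alpha$.
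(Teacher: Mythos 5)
Your proposal is essentially the paper's argument: the paper likewise reduces to bounding $|cl(R_n^+)|$ by $\sqrt{\disc R_n^+}$ (with the regulator cancellation and narrow-vs-ordinary class group issues left implicit), takes the global discriminant of $\Z[\alpha^{+n}]$ from the ordinary-case computation, and evaluates the local index at $p$ by the same rescaling of the conjugates $\alpha_i^+=\alpha_i+q/\alpha_i$ by $q^{1/g}$, obtaining $q^{(g-1)/2}$ per $n$. Your extra care about $R_n^+$ being maximal at $p$ (via the involution swapping $\p_1,\p_2$) and about the rescaled element generating the residue field is a legitimate sharpening of steps the paper passes over, but it is not a different route.
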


\begin{proof}
We first compute the leading term of $|Cl^+(R_n^+)|$. Let $\lambda^{(n)}$ be the element  $\alpha^n+q^n/\alpha^n$. We denote the conjugates of $\lambda^{(n)}$ by $\lambda^{(n)}_i$ for $1\leq i\leq g$ and $\lambda^{(n)}_1:=\lambda^{(n)}$.  As before, we write $\disc(\lambda^{(n)}) = [\Z[\lambda^{(n)}]:R_n^+]^2\disc(R_n^+)$ and  estimate the index $[\Z[\lambda^{(n)}]: R_n^+].$ For simplicity, we do the computation with $n=1$ as for a general $n$, the estimate is raised to the power of $n$. We write $\lambda^{(1)}_i = \lambda_i$ for $1\leq i\leq g$, $\lambda_1:=\lambda$ and $R^+_1 := R^+$. It is enough to compute the index of $\Z[\lambda]$ in $R^+$ at the prime $p$. We recall $K^{\prime}$ is the unramfied extension of $\Q_p$ of degree $g$. As $R^+\otimes\Z_p = \o_{K^{\prime}}$, we follow the same argument as in the proof of Proposition \ref{estimate} to compute this. Therefore, for a positive proportion of positive integers $n$ satisfying certain co-primality conditions, the index is given by the following expression: $$[\Z[\lambda]: R^+]=\frac{\prod_{1\leq i<j\leq g}(\lambda_i - \lambda_j)}{\prod_{1\leq i<j\leq g}(\lambda_i/q^{1/g} - \lambda_j/q^{1/g})} = q^\frac{{(g-1)}}{2}$$
The discriminant of the monogenic ring $\Z[\lambda]$ is a straightforward calculation given in Lemma 3.8 in \cite{tsimshank} where the leading term is given by $q^{g(g-1)/2}.$ Now, the leading term of $\disc(R^+)$ is approximated by dividing this discriminant by the square of the index computed above. The leading term of $|Cl^+(R^+)| = (\disc(R^+))^{1/2}$ equals $q^{\frac{g(g-1)}{4} - \frac{(g-1)}{2}}=q^{(g^2-3g+2)/4}$. Combining this (raised to the power of $n$) with the estimate of $|Cl(R_n)|$ in Proposition \ref{estimate}, we get the desired leading term for $|Cl(R_n)/Cl(R_n^+)|$. The terms involving sines and cosines in the expression of $|Cl^+(R_n)/Cl(R_n^+)|$ are absolutely bounded above. They are bounded below by $1/n$ for a positive density set of integers $n$ by the same argument done in Section \ref{unpolri}.  This completes the proof of Lemma \ref{finalestimate}.
\end{proof}
We note that a polarization on $\widetilde{A}$ extends to a polarization on its N\'eron model and reduces to a polarization on its reudction mod $p$ as well. Therefore, the count in characteristic zero restricted to principally polarized varieties gives us the right estimate in characteristic $p$.
It remains to prove that the kernel of the Norm map in Proposition $\ref{kernorm}$ is non-empty. That is, the subset of $I(\widetilde{A}, L^{\prime})$ with endomorphism ring $R_n$ is not empty. The following proposition, along with the estimate in Lemma \ref{finalestimate}, completes the proof of Theorem \ref{mainthm}.

\begin{prop}
 We assume that $1,\alpha =\alpha_1,$ and its conjugates $\alpha_2, \dots \alpha_g$, are multiplicatively independent. Then for a positive density set of integers $n$, there exists a  principally polarized abelian variety over $L^{\prime}$ with endomorphism ring $R_n$.

\end{prop}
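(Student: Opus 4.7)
The plan is to produce at least one principally polarised abelian variety over $L^{\prime}$ with endomorphism ring $R_n$; by Proposition~\ref{kernorm} the set of all such is then a torsor under $\ker N$ and in particular non-empty. The natural candidate is the abelian variety $A_{\Phi(R_n)}$ corresponding to the trivial ideal class, equipped with the Riemann form $\Psi(x,y) = \tr_{K/\Q}(\lambda x \overline{y})$ where
\[
\lambda \;=\; \frac{\alpha^n - q^n/\alpha^n}{c}
\]
for an appropriate totally positive $c \in K^{+}$. Since complex conjugation on $K$ sends $\alpha \mapsto q/\alpha$, one has $\overline{\lambda} = -\lambda$, so $\lambda$ is purely imaginary as required.

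The first technical task is to choose $c$ so that $(\lambda) = \mathfrak{d}_{R_n}^{-1}$, the condition that makes $\Psi$ a principal polarisation of $A_{\Phi(R_n)}$. Away from $p$ one has $R_n \otimes \Z_l = \Z_l[\alpha]$ by Notation~\ref{smallestend}, so the local different is generated by $f^{\prime}(\alpha)$ where $f$ is the minimal polynomial of $\alpha$; up to units this is $\prod_{j \neq 1}(\alpha - \alpha_j)$, which I would factor as $(\alpha - q/\alpha)$ times a totally real element essentially coming from the derivative of the minimal polynomial of the totally real subfield generator $\alpha + q/\alpha$. Raising to the $n$-th power and identifying the totally real factor with $c$ gives the required description locally at each $l \neq p$. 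At $p$, the order $R_n$ is maximal in the unramified algebra $K_{\p_1} \oplus K_{\p_2}$ by Proposition~\ref{localcondn}, so the local different is trivial, and one checks that $\lambda$ is a local unit at $\p_1$ and $\p_2$ using the slope decomposition: $v_{\p_1}(\alpha^n) = n/g$ while $v_{\p_1}(q^n/\alpha^n) = n(g-1)/g$, and symmetrically at $\p_2$, so the dominant term is a unit at each place. Together this identifies $c$ globally as a totally positive element of $R_n^{+}$ (up to a unit).

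Finally, the positivity condition $\varphi(\lambda)/i > 0$ for every $\varphi \in \Phi$ must be arranged. Writing the conjugates as $\varphi_j(\alpha) = q^{1/2} e^{i\theta_j}$ with $\theta_j \in (0,\pi)$, a direct calculation gives $\varphi_j(\lambda)/i = 2 q^{n/2} \sin(n\theta_j)/\varphi_j(c)$, so the condition reduces to $\sin(n\theta_j) > 0$ for $1 \leq j \leq g$. The multiplicative independence of $q^{1/2}, \alpha_1, \ldots, \alpha_g$ translates into $\Q$-linear independence of $\theta_1,\dots,\theta_g$ together with $2\pi$, so by Weyl's equidistribution criterion the tuples $(n\theta_1,\ldots,n\theta_g)$ are equidistributed in $(\R/2\pi\Z)^g$. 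Consequently a positive density set of integers $n$ (of density $2^{-g}$) lies in the required sign pattern, yielding a principal polarisation on $A_{\Phi(R_n)}$ for such $n$. I expect the main technical obstacle to be the polarisation-ideal computation of the second step: one must reconcile the single global element $\alpha^n - q^n/\alpha^n$ with the local description of $\mathfrak{d}_{R_n}$ at every finite prime simultaneously, and it is precisely the defining local conditions on $R_n$ (maximal at $p$ and equal to $\Z_l[\alpha]$ at $l\neq p$) that keep the inverse different principal and make this reconciliation explicit.
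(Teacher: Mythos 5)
Your strategy (exhibit one principal polarisation on a specific lattice, then invoke Proposition \ref{kernorm}) matches the paper's, and the positivity analysis via Weyl equidistribution is essentially the argument the paper imports from \cite{tsimshank}, Prop.\ 3.6. However, there is a genuine gap in your local analysis at $p$, and it is exactly the point where the paper has to work hardest.

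You assert that $\alpha^n - q^n/\alpha^n$ is a local unit at $\p_1$ and $\p_2$ ``since the dominant term is a unit at each place,'' quoting $v_{\p_1}(\alpha^n)=n/g$ and $v_{\p_1}(q^n/\alpha^n)=n(g-1)/g$. But both of these valuations are strictly positive, so the dominant term has valuation $n/g>0$, not $0$: hence $v_{\p_1}(\alpha^n-q^n/\alpha^n)=n/g>0$, and symmetrically $v_{\p_2}(\alpha^n-q^n/\alpha^n)=n/g>0$. Since $R_n$ is maximal and unramified at $p$, the inverse different is trivial at $\p_1$ and $\p_2$, so $\lambda$ would need to be a $\p_i$-unit, which it is not. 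Attempting to absorb the excess into $c \in K^{+}$ also fails in general: $K^{+}\otimes\Q_p$ is a single unramified degree-$g$ extension of $\Q_p$, so $v_{\p_i}(c)$ is forced to be an integer, while $n/g$ is not unless $g\mid n$; moreover you cannot simultaneously dictate the behaviour of $c$ at $p$ and at the finitely many $l\ne p$ where it must match the different of $\Z_l[\alpha]$ without a separate argument.

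The paper's proof confronts precisely this obstruction. It takes $\lambda_n$ to generate the inverse different of the smaller order $R'_n=\Z[\alpha^n,(q/\alpha)^n]$ (Howe's Prop.\ 9.4), notes that the resulting dual lattice of $R_n$ is $\p_1^{m}\p_2^{k}R_n$ for some integers $m,k$ because $R_n$ and $R'_n$ differ only at $\p_1,\p_2$, and then uses the self-duality of the polarisation kernel to force $m=k$. Once $m=k$, the kernel of the polarisation is the full $p^{gm}$-torsion of $\widetilde{A}$, and since $\widetilde{A}/\widetilde{A}[p^{gm}]\simeq\widetilde{A}$, the polarisation is principal up to isomorphism. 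Without this symmetry argument, or some equivalent device, your construction only produces a polarisation whose kernel is supported at $p$, not a principal one.
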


\begin{proof}
 Let $h_n$ denote the minimal polynomial of $\alpha^n+(q/\alpha)^n$ and $h_n^{\prime}$ be its derivative. Let $u_n =(\alpha^n-(q/\alpha)^n h^{\prime}_n(\alpha^n+(q/\alpha)^n))^{-1}$. We note that $\alpha^n-(q/\alpha)^n$ is purely imaginary while $h^{\prime}_n(\alpha^n+(q/\alpha)^n)$ is totally real. Hence $u_n$ is purely imaginary for all $n$. By (the proof of) Proposition 9.4 in \cite{polrihowe}, the trace dual of $\Z[\alpha^n , (q/\alpha)^n]$ is $u_{n}^{-1}\Z[\alpha^n , (q/\alpha)^n]$. Therefore, if we consider the bilinear form $$(x,y)\mapsto \tr_{K/\Q}(u_nx\overline{y})$$
 then the dual of $\Z[\alpha^n , (q/\alpha)^n]$ with respect to this form is itself. Furthermore, the above form induces a polarization on the elements of $Cl(R_n)$ when $\varphi(u_n)/i$ is a positive real number for every $\varphi\in \Phi.$ The argument given in Proposition $3.6$ of \cite{tsimshank} shows that for a positive proportion of $n$, $\varphi(u_n)/i$ is positive under the assumption that the set $\{q^{1/2}, \alpha_1, \dots ,\alpha_g\}$ is multiplicatively independent.  We assume that $n$ is an integer such that $u_n$ satisfies appropriate conditions so that we get a polarization on the elements of $Cl(R_n).$  Without loss of generality, we assume $n=1$ as the argument remains the same for any $n\geq 1$. Let $\widetilde{A}/L^{\prime}$ be the abelian variety corresponding to the order $R_1:=R$ under the map in Proposition \ref{clinj}. We note that $\widetilde{A}$ has endomorphism ring equal to $R$. Further, recall that $p$ splits  as a product to two primes in $\o_K$. We let $\p_1$ and $\p_2$ be the prime ideals that we get by intersecting them with $R$. Since $R$ is locally  maximal at the primes lying over $p$, the prime ideals $\p_1$ and $\mathfrak{p}_2$ are invertible and give us representatives of elements in $Cl(R)$. Note that $\Z[\alpha , (q/\alpha)]$ and $R$ locally agree everywhere  except at $\p_1$ and $\p_2,$ and the same is true for $R^{\vee}$ and $\Z[\alpha , (q/\alpha)]^\vee =\Z[\alpha, (q/\alpha)]$. Thus, we can write $R^\vee =\p_1^k\p_2^m$ for some positive integers $m$ and $k.$ The dual $R^\vee$ corresponds to the dual abelian variety $\widetilde{A}^{\vee}$. By construction, there exists an isogeny $v:\widetilde{A}\rightarrow \widetilde{A}^{\vee}$. We know that the $p$-divisible group of $\widetilde{A}$ admits a decomposition $\mathscr{G}_{1/g}\oplus\mathscr{G}_{(g-1)/g}$ according to the splitting of the prime $p$ in $\o_K$. As $v$ also induces an isogeny of the respective $p$-divisible groups, we see that the kernel of $v$ is a finite flat subgroup of $\widetilde{A}$ corresponding to the direct sum $\o_{K'}/\mathfrak{p}_1^k\oplus \o_{K'}/\mathfrak{p}_2^m$. The finite flat subgroup to which this quotient corresponds is the direct sum of $\overline{\omega}^k$-torsion of $\mathscr{G}_{1/g}$ and $\overline{\omega}^m$-torsion of $\mathscr{G}_{(g-1)/g}$  where $\overline{\omega}$ is the uniformizer of $\o_{K^{\prime}}$. Since the polarization map has a kernel that is self dual, we see that $m=k.$  Therefore, the kernel equals the $p^{m}$-torsion of $\widetilde{A}$ which shows that $\widetilde{A}$ is principally polarized as $\widetilde{A}^{\vee}\simeq \widetilde{A}/\widetilde{A}[p^{m}] \simeq \widetilde{A}$.
 \end{proof}

 \bibliography{AV}
\bibliographystyle{amsalpha}

%\end{thebibliography}

\end{document}